\newtheorem{thm}{Theorem}
\newtheorem{cor}[thm]{Corollary}
\theoremstyle{definition}
\newtheorem{ex}[thm]{Example}
\providecommand{\abs}[1]{\lvert#1\rvert}
\providecommand{\Abs}[1]{\Bigl\lvert#1\Bigr\rvert}
\providecommand{\norm}[1]{\lVert#1\rVert}
\begin{document}

\begin{frontmatter}
\title{Bayesian predictive inference\\without a prior}
\runtitle{Bayesian predictive inference}

\begin{aug}
\author[A]{\fnms{Patrizia} \snm{Berti}\ead[label=e1]{patrizia.berti@unimore.it}}
\author[B]{\fnms{Emanuela} \snm{Dreassi}\ead[label=e2]{emanuela.dreassi@unifi.it}}
\author[C]{\fnms{Fabrizio} \snm{Leisen}\ead[label=e3]{fabrizio.leisen@gmail.com}}
\author[D]{\fnms{Luca} \snm{Pratelli}\ead[label=e4]{pratel@mail.dm.unipi.it}}
\and
\author[E]{\fnms{Pietro} \snm{Rigo}\ead[label=e5]{pietro.rigo@unibo.it}}
\address[A]{Dipartimento di Matematica Pura ed Applicata ``G. Vitali'', Universit\`a di Modena e Reggio-Emilia, via Campi 213/B, 41100 Modena, Italy
}

\address[B]{Dipartimento di Statistica, Informatica, Applicazioni, Universit\`a di Firenze, viale Morgagni 59, 50134 Firenze, Italy
}

\address[C]{School of Mathematical Sciences, University of Nottingham, University Park, Nottingham, NG7 2RD, UK
}

\address[D]{Accademia Navale, viale Italia 72, 57100 Livorno,
Italy
}

\address[E]{Dipartimento di Scienze Statistiche ``P. Fortunati'', Universit\`a di Bologna, via delle Belle Arti 41, 40126 Bologna, Italy
}
\end{aug}

\begin{abstract}
Let $(X_n:n\ge 1)$ be a sequence of random observations. Let $\sigma_n(\cdot)=P\bigl(X_{n+1}\in\cdot\mid X_1,\ldots,X_n\bigr)$ be the $n$-th predictive distribution and $\sigma_0(\cdot)=P(X_1\in\cdot)$ the marginal distribution of $X_1$. In a Bayesian framework, to make predictions on $(X_n)$, one only needs the collection $\sigma=(\sigma_n:n\ge 0)$. Because of the Ionescu-Tulcea theorem, $\sigma$ can be assigned directly, without passing through the usual prior/posterior scheme. One main advantage is that no prior probability has to be selected. In this paper, $\sigma$ is subjected to two requirements: (i) The resulting sequence $(X_n)$ is conditionally identically distributed, in the sense of \cite{BPR2004}; (ii) Each $\sigma_{n+1}$ is a simple recursive update of $\sigma_n$. Various new $\sigma$ satisfying (i)-(ii) are introduced and investigated. For such $\sigma$, the asymptotics of $\sigma_n$, as $n\rightarrow\infty$, is determined. In some cases, the probability distribution of $(X_n)$ is also evaluated.
\end{abstract}

\begin{keyword}[class=MSC2010]
\kwd[Primary ]{62F15}
\kwd{62M20}
\kwd{60G25}
\kwd[; secondary ]{60G09}
\end{keyword}

\begin{keyword}
\kwd{Asymptotics}
\kwd{Bayesian nonparametrics}
\kwd{Conditional identity in distribution}
\kwd{Exchangeability}
\kwd{Predictive distribution}
\kwd{Sequential prediction}
\kwd{Total variation distance}
\end{keyword}

\end{frontmatter}

\maketitle

\section{Introduction}\label{intro}

Consider a Bayesian forecaster who makes predictions on a sequence $(X_n:n\ge 1)$ of random observations. At each time $n$, she aims to predict $X_{n+1}$ based on $(X_1,\ldots,X_n)$. To this end, she needs to assign the conditional distribution of $X_{n+1}$ given $(X_1,\ldots,X_n)$, usually called the $n$-th {\em predictive distribution}.

To formalize this problem, we fix a measurable space $(S,\mathcal{B})$ and we take $X_n$ to be the $n$-th coordinate random variable on $S^\infty$, namely
\begin{gather*}
X_n(s_1,\ldots,s_n,\ldots)=s_n
\end{gather*}
for each $n\ge 1$ and each $(s_1,\ldots,s_n,\ldots)\in S^\infty$. Moreover, following Dubins and Savage \cite{DS1965}, we introduce the notion of strategy.

Let $\mathcal{P}$ denote the collection of all probability measures on $\mathcal{B}$. A {\em strategy} is a sequence $\sigma=(\sigma_0,\sigma_1,\ldots)$ such that

\vspace{0.2cm}

\begin{itemize}

\item $\sigma_0\in\mathcal{P}$ and $\sigma_n=\{\sigma_n(x):x\in S^n\}$ is a collection of elements of $\mathcal{P}$;

\vspace{0.2cm}

\item The map $x\mapsto\sigma_n(x)(A)$ is $\mathcal{B}^n$-measurable for fixed $n\ge 1$ and $A\in\mathcal{B}$.

\end{itemize}

\vspace{0.2cm}

\noindent Here, $\sigma_0$ should be regarded as the marginal distribution of $X_1$ and $\sigma_n(x)$ as the conditional distribution of $X_{n+1}$ given that $(X_1,\ldots,X_n)=x$. Moreover, $\sigma_n(x)(A)$ denotes the probability attached to the event $A$ by the probability measure $\sigma_n(x)$.

For any strategy $\sigma$, there is a unique probability measure $P_\sigma$ on $(S^\infty,\mathcal{B}^\infty)$ such that
\begin{gather*}
P_\sigma(X_1\in\cdot)=\sigma_0\quad\text{and}\quad P_\sigma\bigl(X_{n+1}\in\cdot\mid (X_1,\ldots,X_n)=x\bigr)=\sigma_n(x)
\\\text{for all }n\ge 1\text{ and }P_\sigma\text{-almost all }x\in S^n\notag.
\end{gather*}

The above result, due to Ionescu-Tulcea, provides the theoretical foundations of Bayesian predictive inference. To make predictions on $(X_n)$, one needs precisely a strategy $\sigma$. The Ionescu-Tulcea theorem guarantees that, for {\em any} $\sigma$, the predictions based on $\sigma$ are consistent with a unique probability distribution $P_\sigma$ for the data sequence $(X_n)$.

However, $(X_n)$ is usually required some distributional properties suggested by the specific problem under consideration. For instance, $(X_n)$ is asked to be exchangeable, or stationary, or Markov, and so on. In these cases, the strategy $\sigma$ can not be arbitrary, for $P_\sigma$ must belong to some given class of probability measures on $(S^\infty,\mathcal{B}^\infty)$.

\subsection{Motivations}\label{exc5} In a Bayesian framework, $(X_n)$ is typically assumed to be {\em exchangeable}. In that case, there are essentially two approaches for selecting a strategy $\sigma$. For definiteness, as in \cite{BDPR2021}, we call them the {\em standard approach} (SA) and the {\em non-standard approach} (NSA). Both are admissible from the Bayesian point of view and both lead to a full specification of the probability distribution of $(X_n)$.

According to SA, to obtain $\sigma$, one should:

\vspace{0.2cm}

\begin{itemize}

\item Select a prior $\pi$, namely, a probability measure on $\mathcal{P}$;

\vspace{0.2cm}

\item Calculate the posterior of $\pi$ given that $(X_1,\ldots,X_n)=x$, say $\pi_n(x)$;

\vspace{0.2cm}

\item Evaluate $\sigma$ as
\begin{gather*}
\sigma_n(x)(A)=\int_\mathcal{P}p(A)\,\pi_n(x)(dp)\quad\text{for all }A\in\mathcal{B},
\end{gather*}
where $\pi_0(x)$ is meant as  $\pi_0(x)=\pi$.
\end{itemize}

\vspace{0.2cm}

Instead, according to NSA, the strategy $\sigma$ can be assigned directly, without passing through the above prior/posterior scheme. Rather than choosing $\pi$ and evaluating $\pi_n$ and $\sigma_n$, the forecaster merely selects her predictive $\sigma_n$. This procedure makes sense because of the Ionescu-Tulcea theorem. See e.g. \cite{BRR1997}, \cite{BCPR2009}, \cite{BDPR2021}, \cite{BDPR2021BIS}, \cite{CR1996}, \cite{FHW2021}, \cite{FLR2000}, \cite{FP2012}, \cite{FP2020}, \cite{HMW}, \cite{HILL}, \cite{QMT13}, \cite{PIT1996}, \cite{PIT06}.

The merits and drawbacks of SA and NSA are discussed in \cite{BDPR2021}. In short, SA is a cornerstone of Bayesian inference but is not motivated by prediction alone. Its main scope is to make inference on other features of the data distribution, such as a random parameter (possibly, infinite dimensional). However, when prediction is the main target, SA is clearly involved. In turn, NSA has essentially two merits. Firstly, it requires the assignment of probabilities on {\em observable facts} only. The next observation $X_{n+1}$ is actually observable, while $\pi$ and $\pi_n$ (being probabilities on $\mathcal{P}$) do not deal with observable facts. Secondly, and more importantly, NSA is much more efficient than SA when prediction is the main goal. In this case, why select the prior $\pi$ explicitly ? Rather than wondering about $\pi$, it seems reasonable to reflect on how $X_{n+1}$ is affected by $(X_1,\ldots,X_n)$.

The above remarks refer to any (Bayesian) prediction problem, both parametric and nonparametric. However, NSA is especially appealing in the {\em nonparametric} case, where selecting a prior with large support is usually hard. For instance, NSA is quite natural when dealing with species sampling sequences. Indeed, this paper has been written having the nonparametric framework in mind.

If $(X_n)$ is assumed to be exchangeable, however, NSA has a gap. Given an arbitrary strategy $\sigma$, the Ionescu-Tulcea theorem does not grant exchangeability of $(X_n)$ under $P_\sigma$. Therefore, for NSA to apply, one should first characterize those strategies $\sigma$ which make $(X_n)$ exchangeable under $P_\sigma$. A nice characterization is \cite[Th. 3.1]{FLR2000}. However, the conditions on $\sigma$ for making $(X_n)$ exchangeable are quite hard to check in real problems.

To bypass the gap mentioned in the above paragraph, the exchangeability assumption could be weakened. One option is to assume $(X_n)$ to be {\em conditionally identically distributed} (c.i.d.). We refer to Subsection \ref{b7y} for c.i.d. sequences. Here, we just mention a few reasons for taking c.i.d. data into account.

\vspace{0.2cm}

\begin{itemize}

\item Roughly speaking, $(X_n)$ is c.i.d. if, at each time $n$, the future observations $(X_k:k>n)$ are identically distributed given the past $(X_1,\ldots,X_n)$. Hence, even if weaker than exchangeability, conditional identity in distribution is a natural assumption for predictive problems.

\item The asymptotic behavior of c.i.d. sequences is very similar to that of exchangeable ones.

\item A meaningful part of the usual Bayesian machinery can be developed under the sole assumption that $(X_n)$ is c.i.d.; see \cite{FHW2021}.

\item A number of interesting strategies cannot be used if $(X_n)$ is exchangeable, but are available if $(X_n)$ is only required to be c.i.d.; see e.g. \cite{BDPR2021}. Furthermore, conditional identity in distribution is more reasonable than exchangeability in a few real problems. Examples occur in various fields, including clinical trials, generalized Polya urns, species sampling models and disease surveillance; see \cite{ACBLG2014}, \cite{BCL}, \cite{BPR2004}, \cite{CZGV}.

\item It is not hard to characterize the strategies $\sigma$ which make $(X_n)$ c.i.d. under $P_\sigma$; see Theorem \ref{c5ty6}. Therefore, unlike the exchangeable case, NSA can be easily implemented. This remark is fundamental for this paper.

\end{itemize}

\vspace{0.2cm}

\subsection{Kernels}

Before going on, one more definition is in order.

A {\em kernel} (or a {\em random probability measure}) on $(S,\mathcal{B})$ is a collection
\begin{gather*}
\alpha=\{\alpha(x):x\in S\}
\end{gather*}
such that $\alpha(x)\in\mathcal{P}$ for each $x\in S$ and the map $x\mapsto\alpha(x)(A)$ is measurable for fixed $A\in\mathcal{B}$.

As an example, suppose that $S=\mathbb{R}$ and $\mathcal{B}$ is the Borel $\sigma$-field. Denote by $\mathcal{N}(a,b)$ the Gaussian law on $\mathcal{B}$ with mean $a\in\mathbb{R}$ and variance $b>0$, i.e.
\begin{gather*}
\mathcal{N}(a,b)(A)=\int_A (2\,\pi\,b)^{-1/2}\exp\bigl\{(x-a)^2/2b\bigr\}\,dx\quad\text{for all }A\in\mathcal{B}.
\end{gather*}
Then,
\begin{gather*}
\alpha(x)=\mathcal{N}\bigl(f(x),\,g(x)\bigr)
\end{gather*}
is a kernel on $(S,\mathcal{B})$ provided $f$ and $g$ are measurable functions from $S$ into itself and $g>0$.

\subsection{Our contribution}\label{z3e5t} This paper is the natural follow up of \cite{BDPR2021} and aims to develop NSA for c.i.d. data. Our main goal is to introduce and investigate new strategies $\sigma$ having the following two properties:

\vspace{0.2cm}

\begin{itemize}

\item[(i)] The sequence $(X_n)$ is c.i.d. under $P_\sigma$;

\vspace{0.2cm}

\item[(ii)] $\sigma_{n+1}$ is a simple recursive update of $\sigma_n$ for each $n\ge 0$.

\end{itemize}

\vspace{0.2cm}

Condition (i) has been already discussed. Condition (ii) is to obtain a fast online Bayesian prediction, in the spirit of \cite{HMW}. Ideally, condition (ii) should imply that each predictive can be evaluated through a simple recursion on the previous one.

To make some examples, let us define
\begin{gather*}
(x,y)=(x_1,\ldots,x_n,y_1,\ldots,y_m)
\end{gather*}
whenever $x=(x_1,\ldots,x_n)\in S^n$ and $y=(y_1,\ldots,y_m)\in S^m$. In this notation, condition (ii) is well realized if $\sigma$ satisfies the recursive equation
\begin{gather}\label{f5ty7}
\sigma_{n+1}(x,y)=q_n(x)\,\sigma_n(x)+(1-q_n(x))\,\alpha_n(y)
\end{gather}
for all $n\ge 0$, $x\in S^n$ and $y\in S$, where $q_n:S^n\rightarrow [0,1]$ is any measurable function and $\alpha_n$ a kernel on $(S,\mathcal{B})$.

According to \eqref{f5ty7}, the predictive $\sigma_{n+1}(x,y)$ is a convex combination of the previous predictive $\sigma_n(x)$ and a new contribution $\alpha_n(y)$. Moreover, the kernel $\alpha_n$ is driven by the last observation $y$ but is not affected by $x$, while the weight $q_n$ depends on $x$ but not on $y$. An obvious interpretation is that, at time $n+1$, after observing $(x,y)$, the next observation is drawn from $\sigma_n(x)$ with probability $q_n(x)$ and from $\alpha_n(y)$ with probability $1-q_n(x)$.

Even if simple, this updating rule is able to model various real situations; see Examples \ref{s5rt}$-$\ref{ali9}. Moreover, to implement such rule, no prior probability on $\mathcal{P}$ is required. The forecaster has only to choose three objects: The marginal distribution of $X_1$ (i.e., $\sigma_0$), the weight $q_n$ of the convex combination, and the contribution $\alpha_n$ of the last observation.

In addition to (ii), $\sigma$ is required to satisfy condition (i). As shown in \cite{BDPR2021}, the latter condition is always true provided
\begin{gather*}
\alpha_n(y)=\delta_y
\end{gather*}
where $\delta_y$ denotes the unit mass at the point $y$. Indeed, some popular strategies admit representation \eqref{f5ty7} with $\alpha_n(y)=\delta_y$. Well known examples are Dirichlet sequences, Beta-GOS sequences, exponential smoothing and generalized Polya urns; see \cite{ACBLG2014}, \cite{BCL} and \cite[Sect. 4]{BDPR2021}. However, for an arbitrary kernel $\alpha_n$, condition (i) may fail. Therefore, in Theorem \ref{m9i8}, we give conditions for $(X_n)$ to be c.i.d. under $P_\sigma$. In particular, these conditions are satisfied whenever
\begin{gather}\label{u9kj7}
\alpha_n(\cdot)(A)=E_{\sigma_0}\bigl(1_A\mid\mathcal{G}_n\bigr),\quad\quad\text{a.s. with respect to }\sigma_0,
\end{gather}
for some filtration $\mathcal{G}_0\subset\mathcal{G}_1\subset\mathcal{G}_2\subset\ldots$ on $(S,\mathcal{B})$.

As an example, take $\mathcal{G}_n=\sigma(\mathcal{H}_n)$ where $\mathcal{H}_n\subset\mathcal{B}$ is a countable partition of $S$ and $\sigma_0(H)>0$ for all $H\in \mathcal{H}_n$. In this case, condition \eqref{u9kj7} implies
\begin{gather*}
\alpha_n(y)=\sum_{H\in\mathcal{H}_n}1_H(y)\,\sigma_0(\cdot\mid H)=\sigma_0\bigl[\cdot\mid H_n(y)\bigr]
\end{gather*}
where $H_n(y)$ is the only $H\in\mathcal{H}_n$ such that $y\in H$. Moreover, $\mathcal{G}_n\subset\mathcal{G}_{n+1}$ provided the partition $\mathcal{H}_{n+1}$ is finer than $\mathcal{H}_n$. With this choice of $\alpha_n$, several meaningful strategies satisfying (i)-(ii), including extensions of Dirichlet and exponential smoothing, can be easily manufactured.

As a further example, not having the form \eqref{f5ty7}, take $S=\mathbb{R}$ and fix any sequence $u_n$ of real numbers such that
\begin{gather*}
0=u_0<u_1<u_2<\ldots<1.
\end{gather*}
Define also $f_0(x)=0$,
\begin{gather*}
f_{n+1}(x,y)=\sqrt{\frac{u_{n+1}-u_n}{1-u_n}}\,\,y+\Bigl(1-\sqrt{\frac{u_{n+1}-u_n}{1-u_n}}\Bigr)\,f_n(x)
\end{gather*}
and
\begin{gather}\label{qq2}
\sigma_n(x)=\mathcal{N}\Bigl(f_n(x),\,1-u_n\Bigr)
\end{gather}
where $n\ge 0$, $x\in S^n$ and $y\in S$. Such a $\sigma$ is in line with our scopes. In fact, to evaluate $\sigma_{n+1}(x,y)$, it suffices to know the last observation $y$ and the mean of $\sigma_n(x)$. Hence, condition (ii) holds. As shown in Theorem \ref{t6m9u}, condition (i) holds true as well. It is also shown that $(X_n)$ is a Gaussian sequence, under $P_\sigma$, with mean 0, variance 1, and a known covariance structure.

The idea underlying \eqref{qq2} can be generalized in various ways. Among other things, the normal distribution can be replaced by any other symmetric stable law. For instance, the normal distribution could be replaced by the Cauchy distribution if heavier tails are regarded more suitable for prediction.

Finally, we focus on the asymptotics of $\sigma_n$ as $n\rightarrow\infty$. In fact, if $(S,\mathcal{B})$ is a standard Borel space, condition (i) implies
\begin{gather*}
P_\sigma\bigl(\sigma_n\rightarrow\mu\text{ weakly}\bigr)=1
\end{gather*}
for some random probability measure $\mu$ on $(S,\mathcal{B})$; see Subsection \ref{b7y}. Our results deal with $\mu$. We give conditions for $\mu\ll\sigma_0$ a.s., for $\mu$ to be degenerate a.s., and for $\norm{\sigma_n-\mu}\overset{a.s.}\longrightarrow 0$ where $\norm{\cdot}$ is total variation norm; see Theorems \ref{u9mh1} and \ref{z5t6yh8}.

\vspace{0.2cm}

\section{Preliminaries}

\subsection{Further notation and assumptions on $(S,\mathcal{B})$} Let $\lambda,\,\nu\in\mathcal{P}$. We write $\lambda\ll\nu$ to mean that $\lambda$ is {\em absolutely continuous} with respect to $\nu$, namely, $\lambda(A)=0$ whenever $A\in\mathcal{B}$ and $\nu(A)=0$. Moreover, $\lambda$ and $\nu$ are {\em singular} if $\lambda(A)=\nu(A^c)=0$ for some $A\in\mathcal{B}$, and $\lambda$ is {\em diffuse} if $\lambda(\{y\})=0$ for all $y\in S$.

We denote by $x$ a point of $S^n$ where $n\ge 0$ is an integer or $n=\infty$. In both cases, $x_i$ is the $i$-th coordinate of $x$. If $n=0$ and $\sigma$ is a strategy, $\sigma_0(x)$ is meant as $\sigma_0(x)=\sigma_0$. Moreover, if $x\in S^\infty$ and $f$ is any map on $S^n$, we write $f(x)$ to denote $f(x)=f(x_1,\ldots,x_n)$. In particular,
\begin{gather*}
\sigma_n(x):=\sigma_n(x_1,\ldots,x_n)\quad\quad\text{for all }x\in S^\infty.
\end{gather*}

Finally, from now on, $S$ is a Borel subset of a Polish space and $\mathcal{B}$ is the Borel $\sigma$-field on $S$.

\subsection{Conditional identity in distribution}\label{b7y}

C.i.d. sequences have been introduced in \cite{BPR2004} and \cite{K} and then investigated in various papers; see e.g. \cite{ACBLG2014}, \cite{BCL}, \cite{BCPR2009}, \cite{BPR2012}, \cite{BPR2013}, \cite{BDPR2021}, \cite{BDPR2021BIS}, \cite{CZGV}, \cite{FHW2021}, \cite{FPS2018}, \cite{FP2020}.

Let $\mathcal{G}_n=\sigma(X_1,\ldots,X_n)$, $\mathcal{G}_0$ the trivial $\sigma$-field, and let $P$ be a probability measure on $(S^\infty,\mathcal{B}^\infty)$. Say that $(X_n)$ is c.i.d. (or that $P$ is c.i.d.) if
\begin{equation*}
P\bigl(X_k\in\cdot\mid\mathcal{G}_n\bigr)=
P\bigl(X_{n+1}\in\cdot\mid\mathcal{G}_n\bigr)\quad\text{a.s. for all }k>n\geq 0.
\end{equation*}
Thus, at each time $n\ge 0$, the future observations $(X_k:k>n)$ are identically distributed given the past. This is actually weaker than exchangeability. Indeed, $(X_n)$ is exchangeable if and only if it is stationary and c.i.d.

The asymptotic behavior of c.i.d. sequences is similar to that of exchangeable ones. In fact, suppose $P$ is c.i.d. and define the empirical measures
\begin{equation*}
\mu_n(x)=\frac{1}{n}\,\sum_{i=1}^n\delta_{x_i}\quad\quad\text{for all }n\ge 1\text{ and }x\in S^\infty.
\end{equation*}
Define also
\begin{gather*}
\mu(x)=\lim_n\mu_n(x)\,\text{ if the limit exists and }\,\mu(x)=\delta_{x_1}\,\text{ otherwise,}
\end{gather*}
where $x\in S^\infty$ and the limit is meant as a weak limit of probability measures. Then, for every fixed $A\in\mathcal{B}$,
\begin{gather*}
\mu(x)(A)=\lim_n\mu_n(x)(A)\quad\quad\text{for }P\text{-almost all }x\in S^\infty.
\end{gather*}
As a consequence, for each $n\geq 0$ and $A\in\mathcal{B}$, one obtains
\begin{gather*}
E_P\bigl\{\mu(A)\mid\mathcal{G}_n\bigr\}=P\bigl(X_{n+1}\in A\mid\mathcal{G}_n\bigr)\text{ a.s.}
\end{gather*}
Thus, as in the exchangeable case, $P\bigl(X_{n+1}\in\cdot\mid\mathcal{G}_n\bigr)=E\bigl\{\mu(\cdot)\mid\mathcal{G}_n\bigr\}$. By martingale convergence, this implies
\begin{gather}\label{d5tni9}
P\bigl(X_{n+1}\in A\mid\mathcal{G}_n\bigr)=E_P\bigl\{\mu(A)\mid\mathcal{G}_n\bigr\}\overset{a.s.}\longrightarrow\mu(A)\quad\quad\text{for each }A\in\mathcal{B}.
\end{gather}

In addition, $(X_n)$ is asymptotically exchangeable, in the sense that the probability distribution of the shifted sequence $(X_n,X_{n+1},\ldots)$ converges weakly to an exchangeable probability measure $Q$ on $(S^\infty,\mathcal{B}^\infty)$. Furthermore, $Q=P$ on the sub-$\sigma$-field $\sigma(\mu)$ generated by $\mu$.

A c.i.d. probability measure $P$ is not completely determined by $\mu$; see \cite[Ex. 17]{BDPR2021}. Hence, the role played by $\mu$ is not as crucial as in the exchangeable case. Nevertheless, the probability distribution of $\mu$ under $P$ is meaningful. In fact, $\mu(A)$ is the long run frequency of the events $\{X_n\in A\}$. Similarly, because of \eqref{d5tni9}, $\mu(A)$ can be regarded as the asymptotically optimal predictor of the event $\{$the next observation belongs to $A\}$. Moreover, as noted above, the restriction of $P$ on $\sigma(\mu)$ is exchangeable.

Finally, we characterize c.i.d. sequences in terms of strategies. The next result is fundamental for this paper.

\begin{thm}{\bf (\cite[Th. 3.1]{BPR2012}).}\label{c5ty6} For any strategy $\sigma$, $(X_n)$ is c.i.d. under $P_\sigma$ if and only if
\begin{gather*}
\sigma_n(x)(A)=\int\sigma_{n+1}(x,y)(A)\,\sigma_n(x)(dy)
\end{gather*}
for all $n\ge 0$, all $A\in\mathcal{B}$ and $P_\sigma$-almost all $x\in S^n$.
\end{thm}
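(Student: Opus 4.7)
The plan is to reduce c.i.d. to a one-step identity and then to compute both conditional laws involved via the Ionescu--Tulcea construction of $P_\sigma$. The one-step identity I would aim at is
\[
P_\sigma\bigl(X_{n+2}\in A\mid\mathcal{G}_n\bigr)=P_\sigma\bigl(X_{n+1}\in A\mid\mathcal{G}_n\bigr)\quad\text{a.s., for every }n\ge 0\text{ and }A\in\mathcal{B}.
\]
The implication c.i.d. $\Rightarrow$ (this identity) is trivial, taking $k=n+2$ in the definition. For the converse I would proceed by induction on $k\ge n+1$: writing $P_\sigma(X_{k+1}\in A\mid\mathcal{G}_n)=E_{P_\sigma}[P_\sigma(X_{k+1}\in A\mid\mathcal{G}_{k-1})\mid\mathcal{G}_n]$, the one-step identity at stage $k-1$ replaces the inner conditional probability by $P_\sigma(X_k\in A\mid\mathcal{G}_{k-1})$, and the tower property together with the inductive hypothesis closes the loop to give $P_\sigma(X_{n+1}\in A\mid\mathcal{G}_n)$.

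Next I would evaluate the two sides. By the very definition of $P_\sigma$,
\[
P_\sigma\bigl(X_{n+1}\in A\mid\mathcal{G}_n\bigr)(x)=\sigma_n(x)(A)\quad\text{for }P_\sigma\text{-a.a.\ }x\in S^n.
\]
For the other side, the tower property and the analogous formula at level $n+1$, combined with the standard Ionescu--Tulcea disintegration
\[
E_{P_\sigma}\bigl[f(X_1,\ldots,X_{n+1})\mid\mathcal{G}_n\bigr](x)=\int f(x,y)\,\sigma_n(x)(dy)
\]
applied to $f(x,y)=\sigma_{n+1}(x,y)(A)$, give
\[
P_\sigma\bigl(X_{n+2}\in A\mid\mathcal{G}_n\bigr)(x)=\int\sigma_{n+1}(x,y)(A)\,\sigma_n(x)(dy)\quad\text{for }P_\sigma\text{-a.a.\ }x.
\]
Equating the two expressions produces exactly the identity in the theorem.

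The only real technicality is that the exceptional null set should not depend on $A$, so that the equality genuinely holds \emph{for all} $A\in\mathcal{B}$ outside a single $P_\sigma$-null set. Since $S$ is Borel in a Polish space, $\mathcal{B}$ is countably generated; fixing a countable generating field $\mathcal{D}$, a countable union of null sets yields a single $P_\sigma$-null set off which the equality holds simultaneously for all $A\in\mathcal{D}$. Both sides are countably additive in $A$ (the right-hand side by monotone convergence, using the measurability of $(x,y)\mapsto\sigma_{n+1}(x,y)(A)$), so a monotone-class argument extends the agreement from $\mathcal{D}$ to all of $\mathcal{B}$. I expect the main obstacle to be not conceptual but careful bookkeeping of null sets and measurable versions when combining the tower property with the disintegration formula.
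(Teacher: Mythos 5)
Your proof is correct. Note, however, that the paper does not prove this statement itself: Theorem \ref{c5ty6} is imported verbatim from \cite[Th. 3.1]{BPR2012}, so there is no in-paper argument to compare against. Your route --- reduce the c.i.d. property to the one-step identity $P_\sigma(X_{n+2}\in A\mid\mathcal{G}_n)=P_\sigma(X_{n+1}\in A\mid\mathcal{G}_n)$ by a tower-property induction, then identify both sides via the Ionescu--Tulcea disintegration --- is the natural and standard one, and your closing remark on making the exceptional null set uniform in $A$ (via a countable generating class and countable additivity) is exactly the right bookkeeping, since the c.i.d. definition is an a.s.\ equality of random measures rather than an $A$-by-$A$ statement.
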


Henceforth, we just say ``$P_\sigma$ is c.i.d." to mean that ``$(X_n)$ is c.i.d. under $P_\sigma$".

\vspace{0.2cm}

\section{Convex combinations of random probability measures}\label{d4j8r}

Let $\nu\in\mathcal{P}$. Moreover, for each $n\ge 0$, let $q_n:S^n\rightarrow [0,1]$ be a measurable function (with $q_0$ constant) and $\alpha_n$ a kernel on $(S,\mathcal{B})$.

In this section, the strategy $\sigma$ satisfies equation \eqref{f5ty7}, namely
\begin{gather*}
\sigma_0=\nu\quad\text{and}\quad\sigma_{n+1}(x,y)=q_n(x)\,\sigma_n(x)+(1-q_n(x))\,\alpha_n(y)
\end{gather*}
for all $n\ge 0$, $x\in S^n$ and $y\in S$. Such a $\sigma$ has been investigated in \cite{BDPR2021} in the special case $\alpha_n=\alpha_0$ for all $n$. Here, the results of \cite{BDPR2021} are extended and improved. Some new examples are also obtained.

We first note that, arguing by induction, $\sigma$ can be written as

\begin{gather}\label{z5rt6}
\sigma_n(x)=\nu\,\prod_{j=0}^{n-1}q_j\,+\,\sum_{i=1}^n\alpha_{i-1}(x_i)\,(1-q_{i-1})\prod_{j=i}^{n-1}q_j
\end{gather}
for all $n\ge 1$ and $x=(x_1,\ldots,x_n)\in S^n$. In formula \eqref{z5rt6}, $\prod_{j=i}^{n-1}q_j$ is meant as 1 when $i=n$, and $q_j$ is a shorthand notation to denote
\begin{gather*}
q_j=q_j(x_1,\ldots,x_j).
\end{gather*}

We next give conditions for $P_\sigma$ to be c.i.d.

\begin{thm}\label{m9i8}
$P_\sigma$ is c.i.d. provided
\begin{gather*}
\nu(A)=\int\alpha_0(z)(A)\,\nu(dz)\quad\quad\text{and}
\\\alpha_n(y)(A)=\int\alpha_{n+1}(z)(A)\,\alpha_n(y)(dz)
\end{gather*}
for all $n\ge 0$, all $A\in\mathcal{B}$ and $\nu$-almost all $y\in S$. (Recall that $\sigma_0=\nu$).
\end{thm}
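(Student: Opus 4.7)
The plan is to verify the fixed-point characterization provided by Theorem \ref{c5ty6}: namely, that for every $n \geq 0$, every $A \in \mathcal{B}$, and $P_\sigma$-a.a.\ $x \in S^n$,
\begin{gather*}
\sigma_n(x)(A) = \int \sigma_{n+1}(x,y)(A)\,\sigma_n(x)(dy). \qquad (\star)
\end{gather*}
Substituting the recursion \eqref{f5ty7} and cancelling $q_n(x)\sigma_n(x)(A)$, the identity $(\star)$ reduces to the invariance $\int \alpha_n(y)(A)\,\sigma_n(x)(dy) = \sigma_n(x)(A)$. Inserting the closed form \eqref{z5rt6}, which writes $\sigma_n(x)$ as a convex combination of $\nu$ and the measures $\alpha_{i-1}(x_i)$, this reduces by linearity to two families of statements: (a$_n$), that $\int \alpha_n(y)(A)\,\nu(dy) = \nu(A)$ for every $A$; and (b$_{k,n}$), that $\int \alpha_n(y)(A)\,\alpha_k(z)(dy) = \alpha_k(z)(A)$ for every $A$, every $0 \leq k < n$, and $\nu$-a.a.\ $z$.

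The heart of the proof is a joint induction on $n$ establishing (a$_n$) and (b$_{k,n}$). The base cases (a$_0$) and (b$_{k,k+1}$) are precisely the two hypotheses of the theorem. For the inductive step I would use a Fubini-style telescoping: for (a$_n$), invoke (a$_{n-1}$) in measure form to write $\nu(dy) = \int \alpha_{n-1}(w)(dy)\,\nu(dw)$, interchange the order of integration, and collapse the inner integral via the theorem's hypothesis at level $n-1$, which reads $\alpha_{n-1}(w)(A) = \int \alpha_n(y)(A)\,\alpha_{n-1}(w)(dy)$ for $\nu$-a.a.\ $w$; a final use of (a$_{n-1}$) finishes. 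The argument for (b$_{k,n}$) with $k < n-1$ is parallel, with (b$_{k,n-1}$) in place of (a$_{n-1}$) and $\alpha_k(z)$ in place of $\nu$ as the outer measure.

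The main obstacle will be null-set bookkeeping: the theorem's hypothesis at level $n-1$ holds only for $\nu$-a.a.\ $w$, so applying it inside an integral against $\alpha_k(z)$ requires that the exceptional $\nu$-null set $N$ satisfies $\alpha_k(z)(N) = 0$ for $\nu$-a.a.\ $z$, and this is precisely what (a$_k$) delivers, since $\int \alpha_k(z)(N)\,\nu(dz) = \nu(N) = 0$. Because $S$ is a Borel subset of a Polish space, $\mathcal{B}$ is countably generated and the measure-valued identities produced in each induction step can be obtained for all $A$ simultaneously off a single $\nu$-null set of $z$'s.

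It remains to convert (b$_{k,n}$), a ``$\nu$-a.a.\ $z$'' statement, into the ``$P_\sigma$-a.a.\ $x$'' form demanded by $(\star)$. For this I would run a separate induction on $i$ showing that the $P_\sigma$-marginal of $X_i$ is absolutely continuous with respect to $\nu$: granted $P_{X_j} \ll \nu$ for $j \leq n$, if $\nu(A) = 0$ then (a$_{j-1}$) forces $\alpha_{j-1}(y)(A) = 0$ for $\nu$-a.a., hence $P_{X_j}$-a.a., $y$, so every summand of \eqref{z5rt6} integrates to zero and $P_{X_{n+1}}(A) = \int \sigma_n(x)(A)\,P_\sigma(dx) = 0$. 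With this absolute continuity in hand, the exceptional $\nu$-null $z$-sets appearing in (b$_{i-1,n}$) are also $P_{X_i}$-null, so $(\star)$ holds for $P_\sigma$-a.a.\ $x$, and Theorem \ref{c5ty6} delivers the claim.
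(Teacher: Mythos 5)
Your proposal is correct and follows essentially the same route as the paper: reduce to the fixed-point condition of Theorem \ref{c5ty6}, prove by induction that $\nu$ is invariant under each $\alpha_n$ and that $\int\alpha_n(z)(A)\,\alpha_j(y)(dz)=\alpha_j(y)(A)$ for $j<n$, then conclude via the closed form \eqref{z5rt6}. The only difference is null-set bookkeeping: where you track $\nu$-null exceptional sets and transfer them to $P_\sigma$ via the marginal absolute continuity $P_{X_i}\ll\nu$, the paper instead constructs a single set $C$ with $\nu(C)=1$ and $\alpha_n(y)(C)=1$ for all $n$ and $y\in C$, and runs the whole argument pointwise on $C$; both devices close the same gap.
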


\begin{proof}
By Theorem \ref{c5ty6}, it suffices to find a set $C\in\mathcal{B}$ such that
\begin{gather*}
\nu(C)=1\quad\text{and}\quad\sigma_{n}(x)(A)=\int\sigma_{n+1}(x,y)(A)\,\sigma_n(x)(dy)
\end{gather*}
for all $n\ge 0$, $A\in\mathcal{B}$ and $x\in C^n$.

For each $n\ge 0$, if $\nu(\cdot)=\int\alpha_n(z)(\cdot)\,\nu(dz)$, then
\begin{gather*}
\int\alpha_{n+1}(z)(A)\,\nu(dz)=\int\int\alpha_{n+1}(z)(A)\,\alpha_n(y)(dz)\,\nu(dy)
\\=\int\alpha_n(y)(A)\,\nu(dy)=\nu(A)\quad\text{for each }A\in\mathcal{B}.
\end{gather*}
Hence, by induction, $\nu(\cdot)=\int\alpha_n(z)(\cdot)\,\nu(dz)$ for all $n\ge 0$.

By standard arguments, there is a set $F\in\mathcal{B}$ such that $\nu(F)=1$ and
\begin{gather*}
\alpha_{n}(y)(A)=\int\alpha_{n+1}(z)(A)\,\alpha_n(y)(dz)\quad\text{for all }n\ge 0,\,A\in\mathcal{B}\text{ and }y\in F.
\end{gather*}
Define $C_0=F$ and, for each $n\ge 0$,
\begin{gather*}
C_{n+1}=\bigl\{y\in C_n:\alpha_j(y)(C_n)=1\text{ for all }j\ge 0\bigr\}.
\end{gather*}

Since $\nu(C_n)=1$ implies $\nu(C_{n+1})=1$, one obtains $\nu(C_n)=1$ for all $n$. Hence, letting
\begin{gather*}
C=\cap_nC_n,
\end{gather*}
it follows that
\begin{gather*}
C\in\mathcal{B},\quad\nu(C)=1,\quad\alpha_n(y)(C)=1\text{ for all }n\ge 0\text{ and }y\in C,
\\\alpha_{n}(y)(A)=\int\alpha_{n+1}(z)(A)\,\alpha_n(y)(dz)\quad\text{for all }n\ge 0,\,A\in\mathcal{B}\text{ and }y\in C.
\end{gather*}

Finally, arguing still by induction (on $n-j$) it follows that
\begin{gather*}
\int\alpha_n(z)(A)\,\alpha_j(y)(dz)=\alpha_j(y)(A)\quad\quad\text{for all }0\le j<n,\,A\in\mathcal{B}\text{ and }y\in C.
\end{gather*}
Hence, because of \eqref{z5rt6}, one obtains
\begin{gather*}
\int\sigma_{n+1}(x,y)(A)\,\sigma_n(x)(dy)=q_n(x)\,\sigma_n(x)(A)+(1-q_n(x))\,\int\alpha_n(y)(A)\,\sigma_n(x)(dy)
\\=q_n(x)\,\sigma_n(x)(A)+(1-q_n(x))\,\sigma_n(x)(A)=\sigma_n(x)(A)
\end{gather*}
for all $n\ge 0$, $A\in\mathcal{B}$ and $x\in C^n$. This concludes the proof.
\end{proof}

The conditions of Theorem \ref{m9i8} are automatically true if each $\alpha_n$ is a version of the conditional probability of $\nu$ given $\mathcal{G}_n$, for some filtration $(\mathcal{G}_n)$ on $(S,\mathcal{B})$.

\begin{cor}\label{y8j}
Let $\mathcal{G}_0\subset\mathcal{G}_1\subset\mathcal{G}_2\subset\ldots\subset\mathcal{B}$ be an increasing sequence of sub-$\sigma$-fields of $\mathcal{B}$. Then, $P_\sigma$ is c.i.d. whenever
\begin{gather*}
\alpha_n(\cdot)(A)=E_\nu\bigl(1_A\mid\mathcal{G}_n\bigr),\quad\quad\nu\text{-a.s., for all }n\ge 0\text{ and }A\in\mathcal{B}.
\end{gather*}
\end{cor}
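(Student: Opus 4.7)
The plan is to reduce the statement to Theorem \ref{m9i8} by showing that its two hypotheses are exactly the tower property of conditional expectation applied to the filtration $(\mathcal{G}_n)$. Since $S$ is Borel in a Polish space, regular conditional probabilities of $\nu$ given each $\mathcal{G}_n$ exist, and the hypothesis of the corollary says precisely that $\alpha_n(\cdot)$ serves as such a regular version; in particular, for any $\mathcal{G}_n$-measurable bounded $f$, one has $\int f\,d\alpha_n(\cdot) = E_\nu(f\mid\mathcal{G}_n)$ $\nu$-a.s.

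First I would check the marginal compatibility condition. Integrating the defining identity for $\alpha_0$ gives
\begin{gather*}
\int\alpha_0(z)(A)\,\nu(dz)=\int E_\nu(1_A\mid\mathcal{G}_0)(z)\,\nu(dz)=\nu(A),
\end{gather*}
so the first hypothesis of Theorem \ref{m9i8} holds.

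Next I would verify the recursion for $\alpha_n$. Since $\mathcal{G}_n\subset\mathcal{G}_{n+1}$, the tower property yields
\begin{gather*}
E_\nu(1_A\mid\mathcal{G}_n)=E_\nu\bigl(E_\nu(1_A\mid\mathcal{G}_{n+1})\mid\mathcal{G}_n\bigr)\quad\nu\text{-a.s.}
\end{gather*}
Using the fact that $\alpha_n$ is a regular conditional probability for $\nu$ given $\mathcal{G}_n$, and that $z\mapsto E_\nu(1_A\mid\mathcal{G}_{n+1})(z)=\alpha_{n+1}(z)(A)$ is bounded measurable, the right-hand side above equals $\int\alpha_{n+1}(z)(A)\,\alpha_n(\cdot)(dz)$. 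Hence
\begin{gather*}
\alpha_n(y)(A)=\int\alpha_{n+1}(z)(A)\,\alpha_n(y)(dz)\quad\text{for }\nu\text{-almost all }y,
\end{gather*}
which is the second hypothesis of Theorem \ref{m9i8}. Applying that theorem then gives that $P_\sigma$ is c.i.d.

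The only subtlety I anticipate is the measure-theoretic bookkeeping: the identity $\alpha_n(\cdot)(A)=E_\nu(1_A\mid\mathcal{G}_n)$ holds for each fixed $A$ outside a $\nu$-null set $N_A$, while Theorem \ref{m9i8} needs an exceptional set independent of $A$. This is handled by the standing assumption that $(S,\mathcal{B})$ is Borel in a Polish space, which ensures the existence of a genuine regular conditional probability given $\mathcal{G}_n$; one then chooses $\alpha_n$ to be such a regular version, so that $\int f\,d\alpha_n = E_\nu(f\mid\mathcal{G}_n)$ simultaneously for all bounded measurable $f$, and in particular for $f=\alpha_{n+1}(\cdot)(A)$. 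Beyond this point, no further computation is required; the corollary follows immediately from Theorem \ref{m9i8}.
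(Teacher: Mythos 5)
Your proposal is correct and follows essentially the same route as the paper: the paper's proof of Corollary \ref{y8j} likewise verifies the two hypotheses of Theorem \ref{m9i8} by writing $\nu(A)=E_\nu\{E_\nu(1_A\mid\mathcal{G}_0)\}$ and applying the tower property $E_\nu(1_A\mid\mathcal{G}_n)=E_\nu\{E_\nu(1_A\mid\mathcal{G}_{n+1})\mid\mathcal{G}_n\}$, identifying the inner integral with $\int\alpha_{n+1}(z)(A)\,\alpha_n(\cdot)(dz)$. Your extra remark on choosing a regular version so the null set does not depend on $A$ is a sensible precaution that the paper leaves implicit (its Theorem \ref{m9i8} absorbs the per-$A$ null sets ``by standard arguments''), but it does not change the argument.
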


\begin{proof}
Just note that
\begin{gather*}
\nu(A)=E_\nu\Bigl\{E_\nu\bigl(1_A\mid\mathcal{G}_0\bigr)\Bigr\}=\int\alpha_0(z)(A)\,\nu(dz)\quad\quad\text{and}
\\\alpha_n(\cdot)(A)=E_\nu\bigl(1_A\mid\mathcal{G}_n\bigr)=E_\nu\Bigl\{E_\nu\bigl(1_A\mid\mathcal{G}_{n+1}\bigr)\mid\mathcal{G}_n\Bigr\}
\\=\int\alpha_{n+1}(z)(A)\,\alpha_n(\cdot)(dz),\quad\quad\nu\text{-a.s.}
\end{gather*}
\end{proof}

We are now able to provide examples of strategies which satisfy equation \eqref{f5ty7} and make $(X_n)$ c.i.d.

\begin{ex}\label{s5rt}\textbf{(Example 13 of \cite{BDPR2021}).}
For each $n\ge 0$, fix a countable partition $\mathcal{H}_n$ of $S$ such that $H\in\mathcal{B}$ and $\nu(H)>0$ for all $H\in\mathcal{H}_n$. Suppose $\mathcal{H}_{n+1}$ is finer than $\mathcal{H}_n$ and define
\begin{gather*}
\alpha_n(y)=\sum_{H\in\mathcal{H}_n}1_H(y)\,\nu(\cdot\mid H)=\nu\bigl[\cdot\mid H_n(y)\bigr]
\end{gather*}
where $H_n(y)$ denotes the only $H\in\mathcal{H}_n$ such that $y\in H$. Letting
\begin{gather*}
\mathcal{G}_n=\sigma(\mathcal{H}_n),
\end{gather*}
one obtains $\mathcal{G}_n\subset\mathcal{G}_{n+1}$ (since $\mathcal{H}_{n+1}$ is finer than $\mathcal{H}_n$) and $\alpha_n(\cdot)(A)=E_\nu\bigl(1_A\mid\mathcal{G}_n\bigr)$ for each $n\ge 0$ and $A\in\mathcal{B}$. Hence, $P_\sigma$ is c.i.d. because of Corollary \ref{y8j}.
\end{ex}

\vspace{0.2cm}

Example \ref{s5rt} can be developed in various ways. For any partition $\mathcal{H}$ of $S$, let
\begin{gather*}
\mathcal{U}(\mathcal{H})=\sup_{H\in\mathcal{H}}\,\,\sup_{y,z\in H}\,d(y,z)
\end{gather*}
where $d$ denotes the distance on $S$.

\vspace{0.2cm}

\begin{ex}\label{m9e}\textbf{(Dirichlet-like sequences).} Fix a constant $c>0$ and define
\begin{gather*}
q_n=\frac{n+c}{n+1+c}\quad\text{and}\quad\alpha_n(y)=\nu\bigl[\cdot\mid H_n(y)\bigr].
\end{gather*}
Then, formula \eqref{z5rt6} yields
\begin{gather*}
\sigma_n(x)=\frac{c\,\nu+\sum_{i=1}^n\nu\bigl[\cdot\mid H_{i-1}(x_i)\bigr]}{n+c}=\frac{c}{n+c}\,\nu\,+\,\frac{n}{n+c}\,\nu_n(x)
\end{gather*}
where
\begin{gather*}
\nu_n(x)=\frac{\sum_{i=1}^n\nu\bigl[\cdot\mid H_{i-1}(x_i)\bigr]}{n}.
\end{gather*}
In turn, the predictives of a Dirichlet sequence are
\begin{gather*}
\beta_n(x)=\frac{c}{n+c}\,\nu\,+\,\frac{n}{n+c}\,\mu_n(x)
\end{gather*}
where $\mu_n(x)=(1/n)\,\sum_{i=1}^n\delta_{x_i}$ is the empirical measure.

The strategies $\sigma$ and $\beta$ look alike, and $\sigma$ reduces to $\beta$ if $\nu\bigl[\cdot\mid H_{i-1}(x_i)\bigr]$ is replaced by $\delta_{x_i}$. Moreover, $\sigma_n(x)$ and $\beta_n(x)$ are usually close for large $n$. In fact, for various distances $D$ on $\mathcal{P}$, one obtains
\begin{gather}\label{c5t}
\lim_nD\bigl[\sigma_n(x),\,\beta_n(x)\bigr]=0\quad\quad\text{for each }x\in S^\infty
\end{gather}
provided $\lim_n\mathcal{U}(\mathcal{H}_n)=0$. For instance, relation \eqref{c5t} holds if $D$ is the bounded Lipschitz metric; see Theorem \ref{n8y7u}.

Despite \eqref{c5t}, however, $\sigma$ and $\beta$ conflict under a fundamental aspect. Indeed,
\begin{gather*}
\sigma_n(x)\ll\nu\quad\quad\text{for all }n\ge 0\text{ and }x\in S^n
\end{gather*}
while this is not true for $\beta_n(x)$. For instance, if $\nu$ is diffuse and $n$ is large, $\beta_n(x)$ is very close to be singular with respect to $\nu$. This striking difference implies that $P_\sigma$ and $P_\beta$ are singular when $\nu$ is diffuse; see Theorem \ref{n8y7u} again.

\end{ex}

\vspace{0.2cm}

\begin{ex}\label{d5n9j}\textbf{(Example \ref{m9e} continued).} The situation of Example \ref{m9e} may appear strange. Suppose $\nu$ is diffuse. On one hand, since $P_\sigma$ and $P_\beta$ are singular, $\sigma$ and $\beta$ induce completely different distributions on the data sequence $(X_n)$. On the other hand, because of \eqref{c5t}, $\sigma$ and $\beta$ provide similar predictions for large $n$.

Such a situation mostly depends on the distance $D$. In fact, $\sigma_n(x)$ and $\beta_n(x)$ are no longer close if $D$ is replaced by some stronger distance on $\mathcal{P}$, such as the total variation distance.

More precisely, fix a bounded measurable function $f:S\rightarrow\mathbb{R}$ and suppose the target is to predict $f(X_{n+1})$ based on $(X_1,\ldots,X_n)$. Then, $\sigma$ and $\beta$ actually yield similar predictions for large $n$. As an example, if $f$ is Lipschitz and $D$ is the bounded Lipschitz metric, one obtains
\begin{gather*}
\Abs{E_\sigma\Bigl\{f(X_{n+1})\mid (X_1,\ldots,X_n)=x\Bigr\}-E_\beta\Bigl\{f(X_{n+1})\mid (X_1,\ldots,X_n)=x\Bigr\}}
\\=\Abs{\int f(t)\,\sigma_n(x)(dt)-\int f(t)\,\beta_n(x)(dt)}\le k\,D\bigl[\sigma_n(x),\,\beta_n(x)\bigr]
\end{gather*}
for some constant $k$ depending only on $f$.

However, $\sigma$ and $\beta$ give conflicting predictions in more elaborated problems. For instance, suppose one aims to predict whether or not the next observation is new. Letting $G_n=\{X_{n+1}=X_i\text{ for some }i\le n\}$, for every $x\in S^n$ one obtains
\begin{gather*}
P_\sigma\bigl(G_n\mid (X_1,\ldots,X_n)=x\bigr)=\sigma_n(x)(\{x_1,\ldots,x_n\})=0
\end{gather*}
while
\begin{gather*}
P_\beta\bigl(G_n\mid (X_1,\ldots,X_n)=x\bigr)=\beta_n(x)(\{x_1,\ldots,x_n\})=n/(n+c).
\end{gather*}
\end{ex}

\vspace{0.2cm}

\begin{ex}\label{t5xd4}\textbf{(Exponential smoothing-like sequences).} Let
\begin{gather*}
\beta_n(x)=q^n\nu+(1-q)\sum_{i=1}^nq^{n-i}\delta_{x_i}
\end{gather*}
where $q\in [0,1]$ is any constant. Making predictions through $\beta$ may be reasonable when the forecaster has only vague opinions on the dependence structure of the data, and yet she feels that the weight of the $i$-th observation $x_i$ should be an increasing function of $i$; see \cite{BCL} and \cite{BDPR2021}. Now, if $q_n=q$ and $\alpha_n(y)=\nu\bigl[\cdot\mid H_n(y)\bigr]$, formula \eqref{z5rt6} reduces to
\begin{gather*}
\sigma_n(x)=q^n\nu+(1-q)\sum_{i=1}^nq^{n-i}\nu\bigl[\cdot\mid H_{i-1}(x_i)\bigr].
\end{gather*}
Essentially the same remarks of Examples \ref{m9e}-\ref{d5n9j}, about the connections between $\sigma$ and $\beta$, can be repeated in this example.
\end{ex}

\vspace{0.2cm}

The next example deals with a more elaborate choice of $q_n$.

\vspace{0.2cm}

\begin{ex}\label{ali9}\textbf{(Reinforcements).} For each $n\ge 1$, fix a set $C_n\in\mathcal{B}^n$, two constants $0<a_n<1/2<b_n<1$, and define
\begin{gather*}
q_n(x)=b_n\,1_{C_n}(x)+a_n\,(1-1_{C_n}(x))\quad\quad\text{for all }x\in S^n.
\end{gather*}
Roughly speaking, the underlying idea is that $\sigma_n(x)$ exhibits good predictive performances whenever $x\in C_n$. Hence, if $x\in C_n$, to predict $x_{n+2}$ based on $(x,x_{n+1})$, the forecaster is inclined to reinforce $\sigma_n(x)$ with respect to $\alpha_n(x_{n+1})$. (Recall that $a_n<1/2<b_n$).

As a concrete example, take $S=[0,1]$ and $\sigma$ as in Example \ref{s5rt}. Moreover, let $\overline{x}_n=(1/n)\,\sum_{i=1}^nx_i$ be the sample mean of $x\in S^n$ and $m_n(x)$ any (measurable) predictor of $x_{n+1}$ based on $\sigma_n(x)$. For definiteness,
\begin{gather*}
m_n(x)=\int t\,\sigma_n(x)(dt).
\end{gather*}
If $m_n(x)$ is regarded as a predictor of the past observations $x_i$, $i\le n$, then
\begin{gather*}
\overline{x}_n-m_n(x)=(1/n)\,\sum_{i=1}^n\bigl\{x_i-m_n(x)\bigr\}
\end{gather*}
is the arithmetic mean of the prediction errors. In a sense, $\sigma_n(x)$ works nicely whenever $\overline{x}_n-m_n(x)$ is small. Therefore, given $\epsilon>0$, one could define
\begin{gather*}
C_n=\bigl\{x\in S^n:\abs{\overline{x}_n-m_n(x)}<\epsilon\bigr\}.
\end{gather*}

Two remarks are in order. First, since $P_\sigma$ is c.i.d.,
\begin{gather*}
\overline{x}_n-m_n(x)\longrightarrow 0\quad\quad\text{for }P_\sigma\text{-almost all }x\in S^\infty.
\end{gather*}
Hence, a.s., the events $C_n$ are eventually true. Second, the previous naive idea could be realized with other choices of $C_n$. For instance,
\begin{gather*}
C_n=\Bigl\{x\in S^n:\Abs{(1/n)\,\sum_{i=1}^n\bigl(x_i-m_{i-1}(x_1,\ldots,x_{i-1})\bigr)}<\epsilon\Bigr\}\quad\quad\text{or}
\\C_n=\Bigl\{x\in S^n:\sup_{0\le t\le 1}\Abs{\mu_n(x)([0,t])-\sigma_n(x)([0,t])}<\epsilon\Bigr\}
\end{gather*}
where $\mu_n(x)=(1/n)\,\sum_{i=1}^n\delta_{x_i}$ is the empirical measure; see e.g. \cite{BRR1997} and \cite{BCPR2009}.
\end{ex}

\vspace{0.2cm}

In the last example, we focus on the special case $\alpha_n=\alpha$ for each $n\ge 0$.

\vspace{0.2cm}

\begin{ex} \textbf{(Ad hoc choice of $\nu$).}
Let $\alpha$ be a kernel on $(S,\mathcal{B})$. If $\alpha_n=\alpha$ for all $n\ge 0$, to apply Corollary \ref{y8j}, it suffices to find a sub-$\sigma$-field $\mathcal{G}\subset\mathcal{B}$ such that
\begin{gather}\label{A3Q11}
\alpha(\cdot)(A)=E_\nu\bigl(1_A\mid\mathcal{G}\bigr),\quad\quad\nu\text{-a.s., for all }A\in\mathcal{B}.
\end{gather}
Usually, $\nu$ is given and one looks for $\alpha$ satisfying condition \eqref{A3Q11}. But the opposite route is admissible as well. Accordingly, in this example, we fix a (suitable) kernel $\alpha$ and we build $\nu$ so as to make equation \eqref{A3Q11} true.

For each kernel $\alpha$ on $(S,\mathcal{B})$, let
\begin{gather*}
C=\bigl\{y\in S:\alpha(y)=\delta_y\text{ on }\sigma(\alpha)\bigr\}
\end{gather*}
where $\sigma(\alpha)$ denotes the $\sigma$-field over $S$ generated by the maps $y\mapsto\alpha(y)(A)$ for all $A\in\mathcal{B}$. Now, fix a kernel $\alpha$ such that $C\ne\emptyset$, a probability $\nu_0\in\mathcal{P}$ supported by $C$, and define
\begin{gather*}
\nu(A)=\int\alpha(y)(A)\,\nu_0(dy)\quad\quad\text{for all }A\in\mathcal{B}.
\end{gather*}
Then,
\begin{gather*}
\alpha(y)(A\cap B)=\alpha(y)(A)\,1_B(y)\quad\quad\text{whenever }A\in\mathcal{B},\,B\in\sigma(\alpha)\text{ and }y\in C.
\end{gather*}
Hence, $\sigma_0(C)=1$ implies
\begin{gather*}
\nu(A\cap B)=\int\alpha(y)(A\cap B)\,\nu_0(dy)=\int\alpha(y)(A)\,1_B(y)\,\nu_0(dy)
\end{gather*}
for all $A\in\mathcal{B}$ and $B\in\sigma(\alpha)$. Letting $A=S$, one obtains $\nu=\nu_0$ on $\sigma(\alpha)$, and the above equation can be rewritten as
\begin{gather*}
\nu(A\cap B)=\int\alpha(y)(A)\,1_B(y)\,\nu(dy).
\end{gather*}
Therefore, equation \eqref{A3Q11} holds with $\mathcal{G}=\sigma(\alpha)$.

For instance, take a kernel $\alpha$, a measurable function $f:S\rightarrow\mathbb{R}$, and suppose that
\begin{gather*}
\alpha(y)=\alpha(y_0)\text{ if }f(y)=f(y_0)\quad\text{and}\quad\alpha(y_0)\Bigl(\{y:f(y)=f(y_0)\}\Bigr)=1
\end{gather*}
for some point $y_0\in S$. Then, $y_0\in C$ and  any $\nu_0$ supported by $C$ could be chosen.
\end{ex}

\vspace{0.2cm}

To conclude this section, we highlight that the family of predictive
distributions introduced in equation \eqref{z5rt6} has applications beyond the
predictive inferential framework of this paper. For instance, a well-known collection of species sampling sequences, namely
the Dirichlet sequences, is recovered by Example \ref{m9e}. Similarly, the Beta-GOS processes of \cite{ACBLG2014} are actually special cases of equation \eqref{z5rt6}. Accidentally, this has an impact in Bayesian nonparametrics where species sampling sequences are used to define priors.

\vspace{0.2cm}

\section{Predictions via stable laws}\label{n8d3wa}

In this section, we let $S=\mathbb{R}$, we fix a constant $\gamma\in (0,2]$, and we introduce a certain class of strategies. Each element $\sigma$ of such a class satisfies conditions (i)-(ii) and the probability measure $\sigma_n(x)$ is $\gamma$-stable for all $n\ge 0$ and $x\in S^n$. (The exponent $\gamma$ of a stable law is usually denoted by $\alpha$, but in this paper $\alpha$ is used to denote kernels).

Let $Z$ be a real random variable with characteristic function
\begin{gather*}
E\bigl\{\exp(i\,t\,Z)\bigr\}=\exp\Bigl(-\frac{\abs{t}^\gamma}{2}\Bigr)\quad\quad\text{for all }t\in\mathbb{R}.
\end{gather*}
For $a\in\mathbb{R}$ and $b>0$, denote by $\mathcal{S}(a,b)$ the probability distribution of $a+b^{1/\gamma}Z$, namely
\begin{gather*}
\mathcal{S}(a,b)(A)=P\bigl(a+b^{1/\gamma}Z\in A)\quad\quad\text{for all }A\in\mathcal{B}.
\end{gather*}
Next, fix the real numbers
\begin{gather*}
0=u_0<u_1<u_2<\ldots< u,
\end{gather*}
and define $f_0=0$ and
\begin{gather*}
f_{n+1}(x,y)=f_n(x)\,\left(1-\left(\frac{u_{n+1}-u_n}{u-u_n}\right)^{1/\gamma}\right)\,+\,y\,\,\left(\frac{u_{n+1}-u_n}{u-u_n}\right)^{1/\gamma}
\end{gather*}
for all $n\ge 0$, $x\in S^n$ and $y\in S$.

In this section, we focus on the strategy
\begin{gather}\label{r6g7}
\sigma_n(x)=\mathcal{S}\Bigl(f_n(x),\,u-u_n\Bigr)\quad\quad\text{for all }n\ge 0\text{ and }x\in S^n.
\end{gather}
It is worth noting that $\sigma_0=\mathcal{S}(0,u)$ and $\sigma_{n+1}(x,y)$ can be easily evaluated based on $y$ and the median of $\sigma_n(x)$. Hence, condition (ii) holds. We now prove condition (i).

\begin{thm}\label{t6m9u}
If $\sigma$ is given by \eqref{r6g7}, then $P_\sigma$ is c.i.d.
\end{thm}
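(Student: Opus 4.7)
My plan is to apply Theorem \ref{c5ty6}: it suffices to show that for every $n\ge 0$, $A\in\mathcal{B}$ and $x\in S^n$,
\begin{equation*}
\sigma_n(x)(A)=\int\sigma_{n+1}(x,y)(A)\,\sigma_n(x)(dy).
\end{equation*}
Equivalently, if $Y\sim\sigma_n(x)$ and, conditionally on $Y$, $W\sim\sigma_{n+1}(x,Y)$, I need to show that the unconditional law of $W$ coincides with $\sigma_n(x)=\mathcal{S}(f_n(x),u-u_n)$.

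The key ingredient will be the stability of the symmetric $\gamma$-stable law $\mathcal{S}(0,1)$: if $Z_1,Z_2$ are i.i.d. with characteristic function $\exp(-|t|^\gamma/2)$ and $c,d\ge 0$ satisfy $c^\gamma+d^\gamma=1$, then the characteristic function of $cZ_1+dZ_2$ is
\begin{equation*}
\exp\bigl(-|ct|^\gamma/2\bigr)\exp\bigl(-|dt|^\gamma/2\bigr)=\exp\bigl(-(c^\gamma+d^\gamma)|t|^\gamma/2\bigr)=\exp\bigl(-|t|^\gamma/2\bigr),
\end{equation*}
so that $cZ_1+dZ_2\stackrel{d}{=}Z_1$. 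With this in hand, I would write $Y=f_n(x)+(u-u_n)^{1/\gamma}Z_1$ and, conditionally on $Y$, $W=f_{n+1}(x,Y)+(u-u_{n+1})^{1/\gamma}Z_2$, where $Z_2$ is independent of $Z_1$ and has the same law. Setting $c=\bigl((u_{n+1}-u_n)/(u-u_n)\bigr)^{1/\gamma}$, the defining recursion gives $f_{n+1}(x,Y)=f_n(x)(1-c)+cY$, and a direct substitution produces
\begin{equation*}
W=f_n(x)+(u-u_n)^{1/\gamma}\,c\,Z_1+(u-u_{n+1})^{1/\gamma}Z_2.
\end{equation*}

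The final step is the algebraic verification that the two coefficients multiplying $Z_1$ and $Z_2$ in the bracketed combination, after factoring out $(u-u_n)^{1/\gamma}$, have $\gamma$-th powers summing to $1$. This is immediate because $(u-u_{n+1})/(u-u_n)=1-c^\gamma$, so that
\begin{equation*}
W=f_n(x)+(u-u_n)^{1/\gamma}\Bigl(c\,Z_1+(1-c^\gamma)^{1/\gamma}Z_2\Bigr),
\end{equation*}
and the stability identity above yields $W\stackrel{d}{=}f_n(x)+(u-u_n)^{1/\gamma}Z_1\sim\mathcal{S}(f_n(x),u-u_n)=\sigma_n(x)$, as required.

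I do not expect any real obstacle: the whole proof is a one-step calculation, and the choice of the coefficient $\bigl((u_{n+1}-u_n)/(u-u_n)\bigr)^{1/\gamma}$ in the definition of $f_{n+1}$ is engineered precisely so that $c^\gamma+(1-c^\gamma)=1$ matches the normalization required by $\gamma$-stability. The only minor point to mention is that $Y$ and $Z_2$ can indeed be realized as independent, which is standard because the kernel $\sigma_{n+1}(x,\cdot)$ depends on $y$ only through the location parameter.
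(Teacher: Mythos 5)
Your proposal is correct and follows essentially the same route as the paper: both verify the condition of Theorem \ref{c5ty6} by representing $Y\sim\sigma_n(x)$ and the conditional draw via independent $\mathcal{S}(0,1)$ variables, computing $f_{n+1}(x,Y)$ from the recursion, and invoking the scaling/convolution stability of symmetric $\gamma$-stable laws (which the paper packages as its ``Claim'' and you establish directly with characteristic functions). No gaps.
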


\begin{proof}
By Theorem \ref{c5ty6}, it suffices to show that
\begin{gather*}
\sigma_n(x)(A)=\int\sigma_{n+1}(x,y)(A)\,\sigma_n(x)(dy)
\end{gather*}
for all $n\ge 0$, $A\in\mathcal{B}$ and $x\in S^n$. We need the following claim.

\vspace{0.2cm}

\textbf{Claim:} Let $a,\,v\in\mathbb{R}$ and $b,\,c>0$. If $Y\sim\mathcal{S}(a,b)$, then
\begin{gather*}
v+c^{1/\gamma}Y\sim\mathcal{S}\Bigl(v+ac^{1/\gamma},\,bc\Bigr)\quad\quad\text{and}
\\E\Bigl\{\mathcal{S}(Y,\,c)(A)\Bigr\}=\mathcal{S}(a,\,b+c)(A)\quad\quad\text{for all }A\in\mathcal{B}.
\end{gather*}

\vspace{0.2cm}

\textbf{Proof of the Claim:} Since $Y\sim a+b^{1/\gamma}Z$,
\begin{gather*}
v+c^{1/\gamma}Y\sim v+c^{1/\gamma}\bigl(a+b^{1/\gamma}Z\bigr)=v+ac^{1/\gamma}+(bc)^{1/\gamma}Z\sim\mathcal{S}\Bigl(v+ac^{1/\gamma},\,bc\Bigr).
\end{gather*}
To prove the second part, take a random variable $T$ independent of $Y$ such that $T\sim\mathcal{S}(0,c)$. Then, $T+Y\sim\mathcal{S}(a,\,b+c)$ and this implies
\begin{gather*}
\mathcal{S}(a,\,b+c)(A)=P(T+Y\in A)=\int P(T+y\in A)\,\mathcal{S}(a,\,b)(dy)
\\=\int\mathcal{S}(y,\,c)(A)\,\mathcal{S}(a,\,b)(dy)=E\Bigl\{\mathcal{S}(Y,\,c)(A)\Bigr\}.
\end{gather*}

\vspace{0.2cm}

We now come back to the Theorem. Fix $n\ge 0$, $x\in S^n$, and define
\begin{gather*}
a=f_n(x),\quad b=u-u_n,\quad v=f_n(x)\,\left(1-\left(\frac{u_{n+1}-u_n}{u-u_n}\right)^{1/\gamma}\,\right),\quad c=\frac{u_{n+1}-u_n}{u-u_n}.
\end{gather*}
Then, $\sigma_n(x)=\mathcal{S}\Bigl(f_n(x),\,u-u_n\Bigr)=\mathcal{S}(a,b)$ and $f_{n+1}(x,y)=v+c^{1/\gamma}y$. By the Claim, if $Y\sim\sigma_n(x)$, then
\begin{gather*}
Y^*:=f_{n+1}(x,Y)\sim\mathcal{S}\Bigl(f_n(x),\,u_{n+1}-u_n\Bigr).
\end{gather*}
Therefore, applying the Claim with $a=f_n(x)$ and $b=u_{n+1}-u_n$, one obtains
\begin{gather*}
\int\sigma_{n+1}(x,y)(A)\,\sigma_n(x)(dy)=\int\mathcal{S}\Bigl(f_{n+1}(x,y),\,u-u_{n+1}\Bigr)(A)\,\sigma_n(x)(dy)
\\=E\Bigl\{\mathcal{S}\bigl(Y^*,\,u-u_{n+1}\bigr)(A)\Bigr\}=\mathcal{S}\Bigl(f_n(x),\,u-u_n\Bigr)(A)=\sigma_n(x)(A).
\end{gather*}
This concludes the proof.
\end{proof}

\vspace{0.2cm}

In the rest of this section, $\sigma$ always denotes the strategy \eqref{r6g7}.

An useful feature of $\sigma$ is its asymptotic behavior, which can be determined quite easily. Define in fact
\begin{gather*}
L=\bigl\{x\in S^\infty:\lim_nf_n(x)\text{ exists and is finite}\bigr\}
\end{gather*}
and $f(x)=\lim_nf_n(x)$ for each $x\in L$. Since $P_\sigma$ is c.i.d., it follows that $P_\sigma(L)=1$. And, for each $x\in L$, one obtains
\begin{gather*}
\sigma_n(x)\longrightarrow\delta_{f(x)}\,\text{ weakly if }\,\sup_nu_n=u\,\text{ and}
\\\sigma_n(x)\longrightarrow\mathcal{S}\Bigl(f(x),\,u-\sup_nu_n\Bigr)\,\text{ in total variation if }\,\sup_nu_n<u.
\end{gather*}
We refer to the proof of Theorem \ref{z5t6yh8} for more details. Here, we turn to examples.

\begin{ex}\textbf{(Cauchy and Normal distributions).}
The most popular cases are $\gamma=1$ and $\gamma=2$. Let $\mathcal{C}(a,b)$ denote the probability measure
\begin{gather*}
\mathcal{C}(a,b)(A)=\frac{2\,b}{\pi}\,\int_A\,\frac{1}{b^2+4\,(t-a)^2}\,dt\quad\quad\text{for all }A\in\mathcal{B}.
\end{gather*}
(Note that, in this parametrization, the standard Cauchy distribution is $\mathcal{C}(0,2)$ and not $\mathcal{C}(0,1)$). Then,
\begin{gather*}
\sigma_n(x)=\mathcal{C}\Bigl(f_n(x),\,u-u_n\Bigr)\quad\text{or}\quad\sigma_n(x)=\mathcal{N}\Bigl(f_n(x),\,u-u_n\Bigr)
\end{gather*}
according to whether $\gamma=1$ or $\gamma=2$. Both strategies can be useful in real problems. Note also that $f_n(x)$ is just a weighted average of the first $n$ observations $x_1,\ldots,x_n$ and, in the normal case, the weights are connected to the conditional variances.
\end{ex}

\vspace{0.2cm}

The next example provides further information on the data sequence $(X_n)$.

\vspace{0.2cm}

\begin{ex}\textbf{(Finite dimensional distributions).}\label{h7n3w} Let
\begin{gather*}
Y_{n+1}=\sum_{i=1}^n(u_i-u_{i-1})^{1/\gamma}\,Z_i+(u-u_n)^{1/\gamma}\,Z_{n+1}\quad\text{for all }n\ge 0,
\end{gather*}
where $Z_1,Z_2,\ldots$ is an i.i.d. sequence with $Z_1\sim\mathcal{S}(0,1)$. Then, $Y_1\sim\mathcal{S}(0,u)$. Furthermore,
\begin{gather*}
(Y_1,\ldots,Y_n)=g_n(Z_1,\ldots,Z_n)\quad\text{and}\quad\sum_{i=1}^n(u_i-u_{i-1})^{1/\gamma}\,Z_i=f_n(Y_1,\ldots,Y_n)
\end{gather*}
where $g_n$ is an invertible linear transformation. Therefore,
\begin{gather*}
P\bigl(Y_{n+1}\in\cdot\mid Y_1,\ldots,Y_n\bigr)=P\bigl(Y_{n+1}\in\cdot\mid Z_1,\ldots,Z_n\bigr)
\\=P\left(\,f_n(Y_1,\ldots,Y_n)+(u-u_n)^{1/\gamma}\,Z_{n+1}\in\cdot\mid Z_1,\ldots,Z_n\right)
\\=\mathcal{S}\Bigl(f_n(Y_1,\ldots,Y_n),\,u-u_n\Bigr)=\sigma_n(Y_1,\ldots,Y_n)\quad\quad\text{a.s.}
\end{gather*}
In other terms, the predictive distributions of the sequence $(Y_n)$ agree with those of $\sigma$, and this implies
\begin{gather*}
P_\sigma(B)=P\bigl((Y_1,Y_2,\ldots)\in B\bigr)\quad\quad\text{for all }B\in\mathcal{B}^\infty.
\end{gather*}
This equation allows to determine the finite dimensional distributions of $(X_n)$ under $P_\sigma$. Here, we just highlight two facts. Firstly,
\begin{gather*}
f_n(Y_1,\ldots,Y_n)=\sum_{i=1}^n(u_i-u_{i-1})^{1/\gamma}\,Z_i\sim u_n^{1/\gamma}\,Z_1\sim\mathcal{S}(0,u_n).
\end{gather*}
Thus, $f_n\sim\mathcal{S}(0,u_n)$ under $P_\sigma$, namely, $P_\sigma(f_n\in A)=\mathcal{S}(0,u_n)(A)$ for all $A\in\mathcal{B}$. Secondly, since $g_n$ is linear, the finite dimensional distributions of $(X_n)$ under $P_\sigma$ are Gaussian when $\gamma=2$. In this case, since $(Y_n)$ is c.i.d., the moments are
\begin{gather*}
E_{P_\sigma}(X_n)=0,\quad E_{P_\sigma}(X_n^2)=u\quad\quad\text{and}
\\E_{P_\sigma}(X_nX_m)=E(Y_nY_m)=E\bigl[Y_n\,E(Y_m\mid Y_1,\ldots,Y_n)\bigr]
\\=E(Y_n\,Y_{n+1})=u_{n-1}+\sqrt{(u_n-u_{n-1})(u-u_{n-1})}\quad\quad\text{for all }1\le n<m.
\end{gather*}
\end{ex}

\vspace{0.2cm}

The last example collects some miscellaneous remarks.

\vspace{0.2cm}

\begin{ex}\label{sd5rf7l}\textbf{(Choice of $\gamma$, $u$ and $u_n$).} To work with $\sigma$, one has only to select $\gamma$ and $u,u_1,u_2,\ldots$ Obviously, the choice of $\gamma$ depends on the specific problem at hand. We just note that, in applications, $\gamma\in\{1,2\}$ is not the unique meaningful choice. For instance, $\gamma\notin\{1,2\}$ is quite common when modeling financial data; see e.g. \cite[Chap. 13]{MCUL}. The numbers $u$ and $u_n$ are scale parameters which control the dispersion structure of $(X_n)$. If $\gamma=2$, for instance, $u$ and $u_n$ determine the variances and covariances of the Gaussian sequence $(X_n)$; see Example \ref{h7n3w}. An important distinguish is $\sup_nu_n=u$ or $\sup_nu_n<u$, as the limiting distribution of $\sigma_n$ is degenerate in the former case while it is not in the latter. Finally, we mention a practically useful choice of $u_n$. Fix $u>0$ and $q\in (0,1)$ and define
\begin{gather*}
u_n=u\,(1-q^n)\quad\quad\text{for all }n\ge 0.
\end{gather*}
Then, $u_{n+1}-u_n=(u-u_n)(1-q)$ and the updating rule for $f_n$ reduces to
\begin{gather*}
f_{n+1}(x,y)=(1-b)\,f_n(x)+b\,y\quad\quad\text{where }b=(1-q)^{1/\gamma}.
\end{gather*}
Equivalently, $f_n(x)=b\,\sum_{j=1}^n(1-b)^{n-j}x_j$ for each $x\in S^n$.
\end{ex}

\vspace{0.2cm}

\section{Asymptotics}\label{s4r6}

We first recall two popular distances on $\mathcal{P}$. Let $\lambda_1,\,\lambda_2\in\mathcal{P}$ and let $F$ be the set of all functions $f:S\rightarrow [-1,1]$ such that $\abs{f(y)-f(z)}\le d(y,z)$ for all $y,\,z\in S$, where $d$ is the distance on $S$. The {\em bounded Lipschitz metric} and the {\em total variation} distance are, respectively,
\begin{gather*}
D(\lambda_1,\lambda_2)=\sup_{f\in F}\,\,\Abs{\int f\,d\lambda_1-\int f\,d\lambda_2}\quad\text{and}\quad\norm{\lambda_1-\lambda_2}=\sup_{A\in\mathcal{B}}\,\abs{\lambda_1(A)-\lambda_2(A)}.
\end{gather*}
It is not hard to see that $D\le 2\,\norm{\cdot}$. Moreover, $D$ metrizes weak convergence of probability measures, in the sense that, for all $\lambda_n,\,\lambda\in\mathcal{P}$,
\begin{gather*}
\lambda_n\rightarrow\lambda\text{ weakly}\quad\Leftrightarrow\quad\lim_nD(\lambda_n,\lambda)=0.
\end{gather*}

We next prove some claims made in Example \ref{m9e}.

\begin{thm}\label{n8y7u}
Let $\sigma$ and $\beta$ be as in Example \ref{m9e}. If $\lim_n\mathcal{U}(\mathcal{H}_n)=0$, then
\begin{gather*}
\lim_nD\bigl[\sigma_n(x),\,\beta_n(x)\bigr]=0\quad\quad\text{for each }x\in S^\infty.
\end{gather*}
Moreover, $P_\sigma$ and $P_\beta$ are singular if $\nu$ is diffuse.
\end{thm}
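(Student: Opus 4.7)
My plan is to establish the two parts of the theorem separately by two short direct arguments, both of which exploit the explicit formulas available in Example \ref{m9e}.

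For the first claim, I would begin by writing
\begin{gather*}
\sigma_n(x)-\beta_n(x)=\frac{n}{n+c}\,\bigl[\nu_n(x)-\mu_n(x)\bigr].
\end{gather*}
Then, for any $f\in F$, the fact that $\nu[\cdot\mid H_{i-1}(x_i)]$ is concentrated on $H_{i-1}(x_i)$ and that $f$ is 1-Lipschitz gives
\begin{gather*}
\Abs{\int f\,d\nu[\cdot\mid H_{i-1}(x_i)]-f(x_i)}\le\sup_{y\in H_{i-1}(x_i)}d(y,x_i)\le\mathcal{U}(\mathcal{H}_{i-1}).
\end{gather*}
Averaging these estimates over $i=1,\ldots,n$ and taking supremum over $f\in F$ yields
\begin{gather*}
D\bigl[\sigma_n(x),\,\beta_n(x)\bigr]\le\frac{1}{n+c}\,\sum_{i=1}^n\mathcal{U}(\mathcal{H}_{i-1}).
\end{gather*}
Since $\mathcal{U}(\mathcal{H}_n)\to 0$ by assumption, Cesàro's lemma gives the first claim.

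For the second claim, assume $\nu$ is diffuse and consider the set of ``no-ties'' sequences
\begin{gather*}
T=\bigl\{x\in S^\infty:x_i\ne x_j\text{ for all }i\ne j\bigr\},
\end{gather*}
which is measurable because $S$ is Borel (so the diagonal of $S\times S$ is measurable). My plan is to prove that $P_\sigma(T)=1$ while $P_\beta(T)=0$, which is enough to conclude singularity. For the first equality, since $\nu[\cdot\mid H]\ll\nu$ for every partition cell, one gets $\sigma_n(x)\ll\nu$ for all $n$ and $x$; as $\nu$ is diffuse, so is $\sigma_n(x)$, and hence $\sigma_n(x)(\{x_1,\ldots,x_n\})=0$. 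A countable union argument then yields $P_\sigma(T)=1$. For the second equality, the diffuseness of $\nu$ gives
\begin{gather*}
\beta_n(X_1,\ldots,X_n)\bigl(\{X_1,\ldots,X_n\}\bigr)=\frac{n}{n+c}\quad\text{a.s. under }P_\beta,
\end{gather*}
so the conditional probabilities $P_\beta\bigl(X_{n+1}\in\{X_1,\ldots,X_n\}\mid\mathcal{G}_n\bigr)$ sum to $\infty$; Lévy's conditional Borel-Cantelli lemma then forces ties to occur infinitely often $P_\beta$-a.s., whence $P_\beta(T)=0$.

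Neither step looks difficult: the first part is essentially a Lipschitz approximation on each partition cell followed by Cesàro averaging, and the second part reduces to a conditional Borel-Cantelli argument applied to a very explicit sequence of conditional probabilities. The only care needed is to check the measurability of $T$ (immediate since $(S,\mathcal{B})$ is standard Borel) and to invoke the conditional form of Borel-Cantelli (instead of the independent version), since the events $\{X_{n+1}\in\{X_1,\ldots,X_n\}\}$ are certainly not independent under $P_\beta$.
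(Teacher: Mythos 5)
Your proof is correct and follows essentially the same route as the paper: the same Lipschitz-plus-Ces\`aro estimate for the bounded Lipschitz distance, and the same separating event (your no-ties set $T$ is exactly the complement of the paper's $G=\bigcup_n G_n$) with the same conditional probabilities $0$ and $n/(n+c)$. The only difference is at the very end, where the paper concludes $P_\beta(G^c)=0$ directly from the monotone bound $P_\beta(G^c)\le P_\beta(G_n^c)=c/(n+c)\rightarrow 0$, so your appeal to L\'evy's conditional Borel--Cantelli lemma, while valid, is heavier machinery than needed.
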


\begin{proof}
Suppose $\lim_n\mathcal{U}(\mathcal{H}_n)=0$ and fix $x\in S^\infty$. It can be assumed $\mathcal{U}(\mathcal{H}_n)<\infty$ for all $n\ge 0$. Since $\abs{f(y)-f(z)}\le d(y,z)$ for all $f\in F$ and $y,\,z\in S$, one obtains
\begin{gather*}
D\bigl[\sigma_n(x),\,\beta_n(x)\bigr]=\sup_{f\in F}\,\,\Abs{\int f(t)\,\sigma_n(x)(dt)-\int f(t)\,\beta_n(x)(dt)}
\\=\frac{n}{n+c}\,\sup_{f\in F}\,\,\Abs{\int f(t)\,\nu_n(x)(dt)-\int f(t)\,\mu_n(x)(dt)}
\\=\frac{1}{n+c}\,\sup_{f\in F}\,\,\Abs{\,\sum_{i=1}^n\int\bigl\{f(t)-f(x_i)\bigr\}\,\nu\bigl[dt\mid H_{i-1}(x_i)\bigr]}
\\\le\frac{1}{n+c}\,\sum_{i=1}^n\int d(t,x_i)\,\nu\bigl[dt\mid H_{i-1}(x_i)\bigr]
\\\le\frac{1}{n+c}\,\sum_{i=1}^n\mathcal{U}(\mathcal{H}_{i-1})\longrightarrow 0\quad\quad\text{as }n\rightarrow\infty.
\end{gather*}

Finally, suppose $\nu$ is diffuse and define
\begin{gather*}
G_n=\bigl\{X_{n+1}=X_i\text{ for some }i\le n\bigr\}\quad\text{and}\quad G=\bigcup_n G_n.
\end{gather*}
As in Example \ref{d5n9j}, for all $x\in S^n$, one obtains
\begin{gather*}
P_\sigma\bigl(G_n\mid (X_1,\ldots,X_n)=x\bigr)=0\quad\text{and}\quad P_\beta\bigl(G_n\mid (X_1,\ldots,X_n)=x\bigr)=n/(n+c).
\end{gather*}
Therefore, $P_\sigma(G_n)=0$ and $P_\beta(G_n)=n/(n+c)$ for all $n$, which in turn implies
\begin{gather*}
P_\sigma(G)=P_\beta(G^c)=0.
\end{gather*}
\end{proof}

Next, for each $x\in S^\infty$, define
\begin{gather*}
\mu(x)=\lim_n\mu_n(x)\,\text{ if the limit exists and }\,\mu(x)=\delta_{x_1}\,\text{ otherwise,}
\end{gather*}
where $\mu_n(x)=(1/n)\,\sum_{i=1}^n\delta_{x_i}$ is the empirical measure and the limit is meant as a weak limit of probability measures. Then,
\begin{gather*}
P_\sigma\Bigl\{x\in S^\infty:\sigma_n(x)\rightarrow\mu(x)\text{ weakly}\Bigr\}=1
\end{gather*}
for any strategy $\sigma$ such that $P_\sigma$ is c.i.d.; see relation \eqref{d5tni9}.

The random probability measure $\mu$ is a meaningful object; see Subsection \ref{b7y}. In the sequel, we investigate $\mu$ when $\sigma$ comes from Sections \ref{d4j8r}-\ref{n8d3wa}.

For each $\tau\in\mathcal{P}$, say that $\tau$ is degenerate if $\tau=\delta_z$ for some $z\in S$. The abbreviation ``a.s." stands for ``$P_\sigma$-a.s." For instance, if $\nu\in\mathcal{P}$, we write $\mu\ll\nu$ a.s. to mean
\begin{gather*}
\mu(x)\ll\nu\quad\quad\text{for }P_\sigma\text{-almost all }x\in S^\infty.
\end{gather*}
Recall also that $q_n(x)=q_n(x_1,\ldots,x_n)$ for all $x\in S^\infty$.

\begin{thm}\label{u9mh1}
If the strategy $\sigma$ satisfies equation \eqref{f5ty7}, then $\sigma_n(x)$ converges in total variation distance for each $x\in S^\infty$ such that $\sum_n(1-q_n(x))<\infty$. Moreover, if $\sigma$ is as in Example \ref{s5rt}, then:

\vspace{0.2cm}

\begin{itemize}

\item $\mu\ll\nu$ a.s. and $\lim_n\norm{\sigma_n-\mu}=0$ a.s. provided $\sum_n(1-q_n)<\infty$ a.s.;

\vspace{0.2cm}

\item $\mu$ is degenerate a.s. provided $\lim_n\mathcal{U}(\mathcal{H}_n)=0$ and there are constants $a>0$ and $c_n\ge 0$ such that

\end{itemize}
\begin{gather}\label{z3e9ij}
\sum_nc_n^2=\infty\quad\text{and}\quad a\le q_n\le 1-c_n\text{ a.s. for all }n\ge 0.
\end{gather}
\end{thm}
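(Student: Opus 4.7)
The plan is to handle the three claims in order, with Parts 1--2 following directly from the recursion \eqref{f5ty7} and Part 3 requiring a genuine martingale argument. For Part 1, I note that \eqref{f5ty7} gives
\[
\sigma_{n+1}(x,y) - \sigma_n(x) = (1-q_n(x))\,\bigl(\alpha_n(y) - \sigma_n(x)\bigr),
\]
so $\norm{\sigma_{n+1}(x,x_{n+1}) - \sigma_n(x)} \le 1 - q_n(x)$ for any $x_{n+1} \in S$. When $\sum_n (1-q_n(x)) < \infty$, this makes $(\sigma_n(x))$ Cauchy in total variation; completeness of $(\mathcal{P},\norm{\cdot})$ produces a TV-limit.

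For Part 2, in Example \ref{s5rt} each $\alpha_n(y) = \nu[\,\cdot\mid H_n(y)\,] \ll \nu$, so \eqref{z5rt6} gives $\sigma_n(x) \ll \nu$ for all $n$ and $x$ by induction. Under the a.s.\ summability of $1-q_n$, Part 1 yields $\norm{\sigma_n - \sigma_\infty} \to 0$ a.s.\ for some random $\sigma_\infty$; since TV convergence implies weak convergence and $\sigma_n \to \mu$ weakly a.s.\ by \eqref{d5tni9}, one identifies $\sigma_\infty = \mu$ a.s. Absolute continuity transfers under TV convergence: $\nu(A)=0 \Rightarrow \sigma_n(A)=0 \Rightarrow \mu(A)=0$.

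For Part 3, I would fix a bounded Lipschitz $f$ with constant $L$ and attack the scalar martingale $M_n := \int f\,d\sigma_n$. Let $g_n(y) := \int f\,d\alpha_n(y) = \int f(t)\,\nu(dt \mid H_n(y))$; since $\alpha_n(y)$ is supported in a cell of diameter $\le \mathcal{U}(\mathcal{H}_n)$, one has $|g_n - f| \le L\,\mathcal{U}(\mathcal{H}_n)$ uniformly. The c.i.d.\ identity from Theorem \ref{c5ty6} applied to \eqref{f5ty7} forces $\int \alpha_n\,d\sigma_n = \sigma_n$ whenever $q_n < 1$, so $\int g_n\,d\sigma_n = M_n$, and consequently
\begin{gather*}
M_{n+1} - M_n = (1-q_n)\bigl(g_n(X_{n+1}) - M_n\bigr), \qquad E\bigl[(M_{n+1}-M_n)^2 \mid \mathcal{G}_n\bigr] = (1-q_n)^2\,W_n,
\end{gather*}
where $W_n := \int (g_n - M_n)^2\,d\sigma_n$. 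Boundedness of $M_n$ makes its conditional quadratic variation finite a.s., and then $(1-q_n)^2 \ge c_n^2$ upgrades this to $\sum_n c_n^2\, W_n < \infty$ a.s.

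Next, compare $W_n$ with the true variance $V_n := \int f^2\,d\sigma_n - M_n^2$. The uniform estimate $|f^2 - g_n^2| \le 2\norm{f}_\infty L\,\mathcal{U}(\mathcal{H}_n)$ gives $|V_n - W_n| \to 0$, and since $\sigma_n \to \mu$ weakly a.s.\ with $f, f^2$ bounded continuous, $V_n \to \mathrm{Var}_\mu(f) =: V_\infty$ a.s., whence $W_n \to V_\infty$ too. On $\{V_\infty > 0\}$ we would have $W_n \ge V_\infty/2$ eventually and therefore $\sum_n c_n^2 W_n \ge (V_\infty/2)\sum_{n \ge N} c_n^2 = \infty$, contradicting the finiteness just obtained; hence $V_\infty = 0$ a.s. Running the argument simultaneously on the countable family $f_k(\cdot) = \min(d(\cdot, z_k), 1)$ for $\{z_k\}$ dense in $S$ shows that $\mu$-a.s.\ every $f_k$ is constant, which forces $\mu$ to be a point mass. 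The main obstacle is the comparison step $|V_n - W_n| = O(\mathcal{U}(\mathcal{H}_n))$: this bridges the martingale-increment bound (which natively controls only $W_n$) and the genuine variance of $f$ under $\sigma_n$, and it is precisely where the hypothesis $\mathcal{U}(\mathcal{H}_n) \to 0$ is used decisively.
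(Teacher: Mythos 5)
Your proof is correct, and on the hardest claim it takes a genuinely different route from the paper's. For the first two bullets the substance matches: the paper also proves TV-convergence by showing $(\sigma_n(x))$ is Cauchy in $(\mathcal{P},\norm{\cdot})$ (it bounds $\norm{\sigma_n(x)-\sigma_{n+k}(x)}$ via the closed form \eqref{z5rt6} rather than your one-step estimate $\norm{\sigma_{n+1}-\sigma_n}\le 1-q_n$, which is tighter and cleaner), and it too identifies the TV-limit with $\mu$ through the a.s.\ weak convergence $\sigma_n\rightarrow\mu$; for $\mu\ll\nu$, however, the paper appeals to an external result (\cite[Th.\ 1]{BPR2013}) via absolute continuity of the finite-dimensional laws, whereas your remark that TV-limits preserve null sets is more elementary and fully adequate, since both bullets share the hypothesis $\sum_n(1-q_n)<\infty$ a.s. For degeneracy of $\mu$, the paper proceeds quite differently: it sets $\Delta_n=\int f\,d\sigma_n-\int f\,d\alpha_{n-1}(x_n)$, shows $\liminf_n\int\Delta_n^2\,dP_\sigma=0$ by the method of \cite[Th.\ 3]{BDPR2021BIS}, extracts a subsequence with $\Delta_{n_j}\overset{a.s.}\longrightarrow 0$, and concludes $\int f\,d\mu=\lim_jf(x_{n_j})$; you instead play the finiteness of the conditional quadratic variation of the bounded martingale $M_n=\int f\,d\sigma_n$ against $\sum_nc_n^2=\infty$ to force $\mathrm{Var}_\mu(f)=0$, and finish with a countable point-separating family. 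Your route is more self-contained and, notably, never uses the lower bound $q_n\ge a>0$ in \eqref{z3e9ij}, so it establishes the degeneracy claim under weaker hypotheses. One small cleanup: the identity $\int g_n\,d\sigma_n=M_n$ is better obtained from the tower structure of the kernels, namely $\int\alpha_n(z)(A)\,\alpha_j(y)(dz)=\alpha_j(y)(A)$ for $j<n$ and $\int\alpha_n(z)(A)\,\nu(dz)=\nu(A)$ combined with \eqref{z5rt6}, as in the proof of Theorem \ref{m9i8}; this gives it for every $n$ with no caveat about $q_n<1$.
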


\begin{proof}
Fix $x\in S^\infty$. By \eqref{z5rt6}, for all $n,\,k\ge 1$, one obtains
\begin{gather*}
\sigma_{n+k}(x)=\nu\,\prod_{j=0}^{n+k-1}q_j(x)\,+\,\sum_{i=1}^{n+k}\alpha_{i-1}(x_i)\,(1-q_{i-1}(x))\prod_{j=i}^{n+k-1}q_j(x)
\\=\sigma_n(x)\,\prod_{j=n}^{n+k-1}q_j(x)\,+\,\sum_{i=n+1}^{n+k}\alpha_{i-1}(x_i)\,(1-q_{i-1}(x))\prod_{j=i}^{n+k-1}q_j(x).
\end{gather*}
Therefore,
\begin{gather*}
\norm{\sigma_n(x)-\sigma_{n+k}(x)}\le 1-\prod_{j=n}^{n+k-1}q_j(x)\,+\,\sum_{i=n+1}^{n+k}(1-q_{i-1}(x)).
\end{gather*}
If $\sum_n(1-q_n(x))<\infty$, then $\prod_{j=n}^\infty q_j(x)$ is well defined and $\prod_{j=n}^\infty q_j(x)\le\prod_{j=n}^{n+k-1}q_j(x)$ for all $n$ and $k$. It follows that
\begin{gather*}
\sup_k\,\norm{\sigma_n(x)-\sigma_{n+k}(x)}\le 1-\prod_{j=n}^\infty q_j(x)\,+\,\sum_{i=n}^\infty(1-q_i(x))\longrightarrow 0\quad\text{as }n\rightarrow\infty.
\end{gather*}
Hence, $\sigma_n(x)$ converges in total variation distance since $(\mathcal{P},\,\norm{\cdot})$ is a complete metric space and $\sigma_n(x)$ is a Cauchy sequence.

\vspace{0.2cm}

Next, suppose $\sigma$ is as in Example \ref{s5rt} and $\sum_n(1-q_n)<\infty$ a.s. Since $P_\sigma$ is c.i.d., $\sigma_n\rightarrow\mu$ weakly a.s. Hence, the first part of this proof implies
\begin{gather*}
\lim_n\,\norm{\sigma_n-\mu}=0\quad\text{a.s.}
\end{gather*}
Furthermore, since $\sigma_n(x)\ll\nu$ for all $n\ge 0$ and $x\in S^n$, the joint distribution of $(X_1,\ldots,X_n)$ is absolutely continuous with respect to $\nu^n$ for all $n\ge 1$. Therefore, $\mu\ll\nu$ a.s. follows from \cite[Th. 1]{BPR2013}.

\vspace{0.2cm}

Finally, suppose $\sigma$ is as in Example \ref{s5rt}, $\lim_n\mathcal{U}(\mathcal{H}_n)=0$, and condition \eqref{z3e9ij} holds. To prove that $\mu$ is degenerate a.s., it suffices to show that, for each $f\in F$, there is a subsequence $(n_j)$ such that
\begin{gather*}
\int f(t)\,\mu(x)(dt)=\lim_jf(x_{n_j})\quad\quad\text{for }P_\sigma\text{-almost all }x\in S^\infty.
\end{gather*}
Since $\sigma_n\rightarrow\mu$ weakly a.s., this relation is equivalent to
\begin{gather*}
\lim_j\int \bigl\{f(t)-f(x_{n_j})\bigr\}\,\sigma_{n_j}(x)(dt)=0\quad\quad\text{for }P_\sigma\text{-almost all }x\in S^\infty.
\end{gather*}
We just give a sketch of the proof of the above limit relation.

\vspace{0.2cm}

Fix $f\in F$ and define
\begin{gather*}
\Delta_n(x)=\int f(t)\,\sigma_n(x)(dt)-\int f(t)\,\alpha_{n-1}(x_n)(dt)\quad\quad\text{for all }x\in S^\infty.
\end{gather*}
Using \eqref{z3e9ij} and arguing as in the proof of \cite[Th. 3]{BDPR2021BIS}, it can be shown that
\begin{gather*}
\liminf_n\int\Delta_n(x)^2\,P_\sigma(dx)=0.
\end{gather*}
Hence, there is a subsequence $(n_j)$ such that $\Delta_{n_j}\overset{a.s.}\longrightarrow 0$ as $j\rightarrow\infty$. Recalling that $\alpha_{n-1}(x_n)=\nu\bigl[\cdot\mid H_{n-1}(x_n)\bigr]$, one also obtains
\begin{gather*}
\Abs{\int f(t)\,\alpha_{n-1}(x_n)(dt)-f(x_n)}\le\int\abs{f(t)-f(x_n)}\,\nu\bigl[dt\mid H_{n-1}(x_n)\bigr]\le\mathcal{U}(\mathcal{H}_{n-1})\longrightarrow 0.
\end{gather*}
This concludes the proof.
\end{proof}

\vspace{0.2cm}

Theorem \ref{u9mh1} can be applied to the examples of Section \ref{d4j8r}. Suppose in fact $\lim_n\mathcal{U}(\mathcal{H}_n)=0$. Then, in Example \ref{t5xd4}, $\mu$ is degenerate a.s. In Example \ref{ali9}, $\mu\ll\nu$ a.s. if $\sum_n(1-b_n)<\infty$ and $\mu$ is degenerate a.s. if $\sum_n(1-b_n)^2=\infty$ and $\inf_na_n>0$. However, Theorem \ref{u9mh1} does not work in Example \ref{m9e}, for in that case
\begin{gather*}
\sum_n(1-q_n(x))=\infty\quad\text{and}\quad\sum_n(1-q_n(x))^2<\infty\quad\text{for all }x\in S^\infty.
\end{gather*}
Indeed, the behavior of $\mu$ in Example \ref{m9e} is an open problem.

Finally, we turn to the strategies of Section \ref{n8d3wa}.

\begin{thm}\label{z5t6yh8}
In the notation of Section \ref{n8d3wa}, let
\begin{gather*}
L=\bigl\{x\in S^\infty:\lim_nf_n(x)\text{ exists and is finite}\bigr\},
\\f(x)=\lim_nf_n(x)\text{ for each }x\in L\quad\text{and}\quad u^*=\sup_nu_n.
\end{gather*}
If $\sigma$ is the strategy \eqref{r6g7} then, for each $x\in L$,
\begin{gather*}
\sigma_n(x)\longrightarrow\delta_{f(x)}\,\text{ weakly if }\,u^*=u\,\text{ and}
\\\sigma_n(x)\longrightarrow\mathcal{S}\bigl(f(x),\,u-u^*\bigr)\,\text{ in total variation if }\,u^*<u.
\end{gather*}
Moreover, $P_\sigma(L)=1$ and $f\sim\mathcal{S}(0,u^*)$ under $P_\sigma$, namely
\begin{gather*}
P_\sigma(f\in A)=\mathcal{S}(0,u^*)(A)\quad\quad\text{for all }A\in\mathcal{B}.
\end{gather*}
\end{thm}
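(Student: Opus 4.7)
The plan rests on the representation of $P_\sigma$ already established in Example \ref{h7n3w}. There, an i.i.d.\ sequence $(Z_i)$ with $Z_1 \sim \mathcal{S}(0,1)$ is constructed so that the sequence
\[
Y_{n+1} = S_n + (u-u_n)^{1/\gamma} Z_{n+1}, \qquad S_n := \sum_{i=1}^n (u_i - u_{i-1})^{1/\gamma} Z_i,
\]
satisfies $P_\sigma = \mathrm{Law}(Y_1, Y_2, \ldots)$ and $f_n(Y_1,\ldots,Y_n) = S_n$. Hence, under $P_\sigma$, the random variable $f_n(X_1,\ldots,X_n)$ is distributionally equal to the partial sum $S_n$ of independent symmetric $\gamma$-stable contributions, and everything reduces to an analysis of this series.

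First I would establish that $S_n$ converges almost surely. The stability relation gives $S_n \sim \mathcal{S}(0, u_n)$ for each $n$, so $S_n$ converges in distribution to $\mathcal{S}(0, u^*)$ as $u_n \uparrow u^*$. By L\'evy's classical equivalence theorem, convergence in distribution of partial sums of independent real-valued random variables implies almost-sure convergence; thus $S_n \to S_\infty$ a.s.\ with $S_\infty \sim \mathcal{S}(0, u^*)$. Transporting this through the law identification of Example \ref{h7n3w} yields at once $P_\sigma(L) = 1$ and $f \sim \mathcal{S}(0, u^*)$ under $P_\sigma$.

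To complete the weak and total variation statements, I would fix $x \in L$ and note that $\sigma_n(x) = \mathcal{S}(f_n(x), u-u_n)$ is the law of $f_n(x) + (u-u_n)^{1/\gamma} W$ with $W \sim \mathcal{S}(0,1)$. When $u^* = u$, the scale $(u-u_n)^{1/\gamma}$ vanishes while $f_n(x) \to f(x)$, so $\sigma_n(x) \to \delta_{f(x)}$ weakly. When $u^* < u$, the scale converges to the positive limit $(u-u^*)^{1/\gamma}$, and the density of $\sigma_n(x)$ is
\[
p_n(t) = (u-u_n)^{-1/\gamma}\, g\!\left( \frac{t - f_n(x)}{(u-u_n)^{1/\gamma}} \right),
\]
where $g$ is the bounded continuous density of $\mathcal{S}(0,1)$. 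Pointwise convergence of $p_n$ to the density of $\mathcal{S}(f(x), u-u^*)$ is then immediate, and Scheff\'e's lemma upgrades this to convergence in total variation.

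The main obstacle I foresee is the a.s.\ convergence of the stable series $S_n$: for $\gamma = 2$ this is standard Kolmogorov theory, but for $\gamma < 2$ the summands lack variance, and for $\gamma \le 1$ they even lack a mean, so direct moment arguments are unavailable. Invoking L\'evy's equivalence theorem for sums of independent real-valued random variables circumvents this issue and makes the step short. A secondary technicality is the boundedness and continuity of the standard symmetric $\gamma$-stable density, a classical fact that legitimises the use of Scheff\'e in the final step.
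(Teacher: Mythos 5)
Your proposal is correct, and it reaches the conclusion $P_\sigma(L)=1$ by a genuinely different route than the paper. The paper handles the pointwise statements exactly as you do in substance (pointwise convergence of the densities of $\sigma_n(x)=\mathcal{S}(f_n(x),u-u_n)$ plus a Scheff\'e-type argument, except that it obtains the densities by Fourier inversion of the characteristic functions rather than by your scaling formula). For $P_\sigma(L)=1$, however, the paper does not touch the representation of Example \ref{h7n3w} at this stage: it invokes the convergence of types theorem to identify $L$ with the set $\{x:\sigma_n(x)\text{ converges weakly}\}$, and then uses the general c.i.d.\ fact that $\sigma_n\rightarrow\mu$ weakly $P_\sigma$-a.s.\ (relation \eqref{d5tni9}, a martingale convergence consequence). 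You instead exploit the identity $f_n(Y_1,\ldots,Y_n)=\sum_{i=1}^n(u_i-u_{i-1})^{1/\gamma}Z_i$ from Example \ref{h7n3w} and apply the L\'evy(--It\^o--Nisio) equivalence theorem for series of independent real random variables, which is valid and correctly sidesteps the absence of moments when $\gamma<2$. Your route is more self-contained for this particular strategy and delivers $f\sim\mathcal{S}(0,u^*)$ simultaneously (the a.s.\ limit of $S_n$ inherits the distributional limit $\mathcal{S}(0,u^*)$), whereas the paper derives that law afterwards by passing to the limit in $E_{P_\sigma}\{\exp(i\,t\,f_n)\}$; the paper's route, on the other hand, leans on the general c.i.d.\ asymptotic machinery and so generalizes to settings without an explicit independent-increment representation. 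Both arguments are sound.
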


\begin{proof}
For all $n\ge 0$ and $x\in S^n$, the characteristic function of $\sigma_n(x)$ is
\begin{gather*}
\phi_n(x,t)=\int\exp\bigl(i\,t\,y\bigr)\,\sigma_n(x)(dy)=\exp\Bigl(i\,t\,f_n(x) -\frac{u-u_n}{2}\,\abs{t}^\gamma\Bigr).
\end{gather*}
Since $\phi_n(x,\cdot)$ is integrable, $\sigma_n(x)$ is absolutely continuous (with respect to Lebesgue measure) with density
\begin{gather*}
h_n(x,y)=(1/2\pi)\,\int\exp(-i\,t\,y)\,\phi_n(x,t)\,dt\quad\text{for all }y\in\mathbb{R}.
\end{gather*}
Having noted this fact, fix $x\in L$. Then,
\begin{gather*}
\lim_n\phi_n(x,t)=\exp\Bigl(i\,t\,f(x) -\frac{u-u^*}{2}\,\abs{t}^\gamma\Bigr)\quad\quad\text{for each }t\in\mathbb{R}
\end{gather*}
or equivalently
\begin{gather*}
\sigma_n(x)\longrightarrow\mathcal{S}\bigl(f(x),\,u-u^*\bigr)\quad\text{weakly}
\end{gather*}
where $\mathcal{S}\bigl(f(x),\,0\bigr):=\delta_{f(x)}$. Suppose now that $u^*<u$. Then, $\mathcal{S}\bigl(f(x),\,u-u^*\bigr)$ is absolutely continuous with density
\begin{gather*}
h(x,y)=(1/2\pi)\,\int\exp(-i\,t\,y)\,\exp\bigl(i\,t\,f(x)-\frac{u-u^*}{2}\,\abs{t}^\gamma\bigr)\,dt\quad\text{for all }y\in\mathbb{R}.
\end{gather*}
Therefore, $h(x,y)=\lim_nh_n(x,y)$ for all $y\in\mathbb{R}$, and this in turn implies
\begin{gather*}
\lim_n\,\norm{\sigma_n(x)-\mathcal{S}\bigl(f(x),\,u-u^*\bigr)}=\lim_n\int\Bigl(h(x,y)-h_n(x,y)\Bigr)^+\,dy=0.
\end{gather*}

Next, by the convergence of types theorem (see e.g. \cite[p. 174]{BRE}), the set $L$ can be written as
\begin{gather*}
L=\bigl\{x\in S^\infty:\sigma_n(x)\text{ converges weakly}\bigr\}.
\end{gather*}
Hence, $P_\sigma(L)=1$ as $P_\sigma$ is c.i.d.

Finally, as noted in Example \ref{h7n3w}, $f_n\sim\mathcal{S}(0,u_n)$ under $P_\sigma$. It follows that
\begin{gather*}
E_{P_\sigma}\bigl\{\exp(i\,t\,f)\bigr\}=\lim_nE_{P_\sigma}\bigl\{\exp(i\,t\,f_n)\bigr\}=\lim_n\exp\Bigl(-\frac{u_n}{2}\,\abs{t}^\gamma\Bigr)=\exp\Bigl(-\frac{u^*}{2}\,\abs{t}^\gamma\Bigr)
\end{gather*}
for all $t\in\mathbb{R}$. Hence, $f\sim\mathcal{S}(0,u^*)$ under $P_\sigma$, and this concludes the proof.
\end{proof}

\vspace{0.2cm}

\section{Some hints for future work}

As claimed in the Introduction, the main goal of this paper is to introduce and investigate new strategies satisfying conditions (i)-(ii). This has been realized through the strategies of Sections \ref{d4j8r}-\ref{n8d3wa}. However, obviously, many other strategies satisfying (i)-(ii) could be taken into account. In addition, some aspects related to our work could be investigated. A (non-exhaustive) list of research topics is appended below.

\vspace{0.2cm}

\begin{itemize}

\item Usually, the available information at time $n$ is broader than the observed values of $X_1,\ldots,X_n$. Hence, $\mathcal{G}_n=\sigma(X_1,\ldots,X_n)$ could be replaced with some $\sigma$-field $\mathcal{G}_n^*\supset\mathcal{G}_n$. The notion of c.i.d. sequence can be referred to an arbitrary filtration $(\mathcal{G}_n^*)$; see \cite{BPR2004}. Hence, to replace $\mathcal{G}_n$ with $\mathcal{G}_n^*$ seems to be technically possible, even if it requires a certain effort.

\vspace{0.2cm}

\item In \cite{HMW}, condition (ii) has been realized exploiting copulas. This approach looks very promising and it would be interesting to investigate its connections with \cite{BDPR2021} and this paper. Another recent reference to be involved is \cite{FHW2021}.

\vspace{0.2cm}

\item The non-standard approach to prediction (i.e., NSA) is quite natural as regards species sampling sequences. Hence, in the spirit of \cite{BCL}, the strategies of Sections \ref{d4j8r}-\ref{n8d3wa} (and more generally any other strategy satisfying (i)-(ii)) could be used in the species sampling framework. Among other things, one could investigate the length of the partition induced by the data when the strategy comes from Sections \ref{d4j8r}-\ref{n8d3wa}; see \cite{BCL} again.

\vspace{0.2cm}

\item As noted after Theorem \ref{u9mh1}, the properties of $\mu$ in Example \ref{m9e} are an open problem.

\vspace{0.2cm}

\item A relevant issue, deliberately left out of this paper, is the choice among different strategies. A possible approach is using scoring rules, as highlighted in \cite{GR2007}. For instance, in Section \ref{n8d3wa}, the choice of $\gamma$ and $u,u_1,u_2,\ldots$ could be made via scoring rules; see also Example \ref{sd5rf7l}.

\end{itemize}

\end{document}